\documentclass{amsart}
\usepackage[margin=1in]{geometry}
\usepackage{graphicx} 
\usepackage{enumerate,enumitem,verbatim}
\usepackage{xcolor,tikz-cd}
\usepackage{listings}
\usepackage[colorlinks=true,citecolor=black,linkcolor=black,urlcolor=blue]{hyperref}
\newcommand{\seqnum}[1]{\href{http://oeis.org/#1}{\underline{#1}}}
\usepackage{subcaption}
\usepackage{mathtools,cleveref}
\usepackage{amssymb,amsmath}
\RequirePackage{hyperref}
\usepackage{cleveref}
\usepackage[noend]{algpseudocode}
\usepackage[linesnumbered,ruled]{algorithm2e}

\theoremstyle{definition} 
\newtheorem{definition}{Definition}[section]
\newtheorem{theorem}{Theorem}[section]
\newtheorem{corollary}{Corollary}[theorem]

\newtheorem{lemma}{Lemma}[section]

\theoremstyle{remark}
\newtheorem{remark}[theorem]{Remark}

\newtheorem{question}{Question}

\newcommand{\Lean}{\mathrm{Lean}}
\newcommand{\Silean}{\mathrm{Silean}}
\newcommand{\Bilean}{\mathrm{Bilean}}
\newcommand{\cleave}{\mathrm{cleave}}

\usepackage{tikz}
\makeatletter
\newcommand{\vo}
{\vec{o}\@ifnextchar{^}{\,}{}}
\makeatother

\title{The Lean Number of a Hypergraph}

\author[Piehowski]{Harrison Piehowski}
\address[H.\ Piehowski]{Department of Mathematical Sciences, University of Wisconsin-Milwaukee, Milwaukee, WI 53211
}
\email{\textcolor{blue}{\href{mailto:piehows4@uwm.edu}{piehows4@uwm.edu}}}

\begin{document}

\begin{abstract}
Inspired by the notion of tricolorability of knots, we introduce the concept of \textit{lean coloring} for hypergraphs and the associated \textit{lean number} of a hypergraph. 
Lean coloring often involves very few colors, yet still requires the methods of usual graph coloring, forcing the overall complexity to be NP-Hard. We provide two alternative formulations of the lean coloring problem that involve a type of coloring on abstract simplicial complexes and a partial coloring on bipartite graphs. 
We then provide bounds for the lean numbers of hypergraphs that are $k$-uniform, $k$-partite, wide-path connected, or $r$-complete.
Python-like script is included to allow the implementation and study of a lean coloring algorithm. We conclude with some directions for future work and present the lean numbers of $130$ knots and links.
\end{abstract}

\maketitle

\section{Introduction}

We introduce lean coloring and the lean number of a hypergraph and provide results on this new graph coloring invariant. 
To begin, recall that an \textit{(undirected) hypergraph} $H=(V,E)$ is defined as a pair of sets such that $V \subset \mathbb{Z}^+$ (called the set of \textit{vertices}), $E \subseteq \mathcal{P}(V)$ (called the set of \textit{(hyper)edges}), where $\mathcal{P}(V)$ denotes the power set of $V$. 
Throughout, 
we assume that $2 \leq |V| < \infty$, $1 \leq |E| < \infty$, and that there are no empty edges, i.e., $\emptyset\notin E$.  
A \textit{lean coloring of $H$} is an assignment of \underline{at least two} colors to the vertices in $V$ so that within each edge, the vertices display either all distinct colors or exactly one color. Formally, a lean coloring $L$ is a function $L:V \to \mathbb{Z}^+$ so that $|L(V)| \geq 2$ and that for every $e \in E$: $|L(e)| \in\{1,|e|\}$. If $|L(e)| = 1$, we say that $e$ is \textit{monocolored} under $L$, and if $|L(e)| = |e|$, we say that $e$ is \textit{multicolored} under $L$. The \textit{lean number} of a hypergraph $H$ is the fewest colors necessary to lean color $H$, and we denote this number by $\Lean(H)$. \Cref{fig:leancolorings} illustrates three distinct lean colorings of a hypergraph.

\begin{figure}[h]
    \centering
    \begin{subfigure}[b]{0.25\textwidth}
        \includegraphics[width=\textwidth]{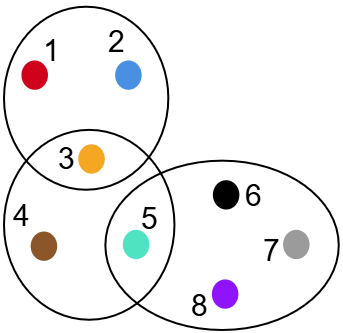}
        \caption{Coloring 1}
        \label{fig:color1}
    \end{subfigure}
    ~
    \begin{subfigure}[b]{0.25\textwidth}
        \includegraphics[width=\textwidth]{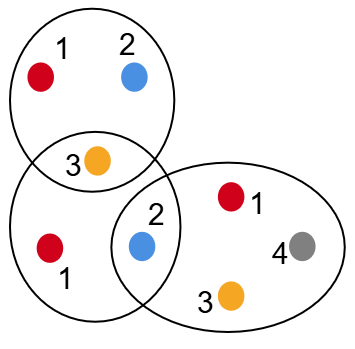}
        \caption{Coloring 2}
        \label{fig:color2}
    \end{subfigure}
    ~
    \begin{subfigure}[b]{0.25\textwidth}
        \includegraphics[width=\textwidth]{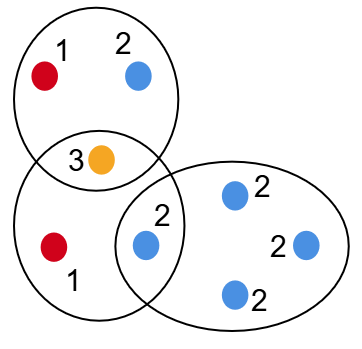}
        \caption{Coloring 3}
        \label{fig:color3}
    \end{subfigure}
    \caption{Three lean colorings of a hypergraph. 
    In addition to each vertex being colored, 
    the colors are associated with the numbers $1$ through $8$ to aid in distinguishing between the colors. 
    Coloring $1$ uses eight colors, 
    but this is not optimal, meaning that there is a lean coloring which uses fewer colors. 
    Coloring $2$ uses four colors, but is still not optimal. Under colorings $1$ and $2$, there are three multicolored edges. Coloring $3$ is optimal by inspection and demonstrates that the lean number of this hypergraph is $3$. Under coloring $3$, there are two multicolored edges and one monocolored edge. Optimal lean colorings of a hypergraph are far from unique.}
\label{fig:leancolorings}
\end{figure}

Note that coloring every vertex a distinct color results in a valid lean coloring; however, such a lean coloring is often not optimal. For example, coloring $1$ in \Cref{fig:leancolorings} uses eight colors by coloring each vertex a distinct color. This establishes that the set of lean colorings of $H$, denoted $\mathcal{L}(H)$, is not empty and, hence, the lean number of a hypergraph is bounded above by the size of the vertex set. Let $L$ be a lean coloring of $H$, so that $L\in\mathcal{L}(H)$, and define $|L|$ to be the number of colors used in this lean coloring of $H$. 
Then $\{|L|: L \in \mathcal{L}(H)\}$, i.e., the set consisting of the numbers of colors used among the set of lean colorings of the hypergraph $H$, is a finite subset of the interval of integers $[2,|V|]$. 
By the well-ordering principle, there must be a unique least element in this set, 
which establishes the uniqueness of the lean number of a hypergraph. For example, the hypergraph in \Cref{fig:leancolorings} can be lean colored with as few as $3$ colors and at most $8$ colors, and so its lean number is $3$. 

Having established the existence and uniqueness of the lean number of a hypergraph, we proceed in \Cref{sec:LCARP} to describe the problem of evaluating a hypergraph's lean number in three ways. 
Our work in \Cref{sec:LCARP} allows us to state \Cref{thm:bijection}, which establishes an equality between the following: $1$. The number of connected and reduced hypergraphs on $n$ vertices, $2$. The number of connected and reduced bipartite graphs $(V_1,V_2,E)$, where $|V_1|=n$, and $3$. The number of connected abstract simplicial complexes with $n$ points. 
In \Cref{sec:special hypergraphs}, we prove results on the lean numbers of connected hypergraphs, in particular $k$-uniform, $k$-partite, and $r$-complete hypergraphs. Finally, in \Cref{sec:Algo}, we detail a Python-like algorithm that can produce a bound for the lean number of an arbitrary hypergraph. The explicit algorithm can be found at https://github.com/HarrisonP123/Lean-Number.

Before proceeding to establish the aforementioned results, we give the reader our motivation for this problem, which stems from coloring arcs of knots.

\subsection{Motivation to Study Lean Colorings}
To state our motivation and inspiration for lean colorings, we begin with a quick review of some knot theory concepts. 
A mathematical knot is an embedding of the sphere $S^1$ into $\mathbb{R}^3$. A knot's \textit{projection} is an associated diagram in $\mathbb{R}^2$ depicting the arcs and crossings of the knot when it is ``laid flat."
Accordingly, a mathematical link is an embedding of any finite number of copies of $S^1$ into $\mathbb{R}^3$. 
All that follows applies equally to links, but we restrict our attention to knots. There are properties of knots that do not change under ambient isotopies, which are called \textit{invariants} of the knot. 
An elementary knot invariant is \textit{tricolorability}, and we recall that a knot is \textit{tricolorable} if and only if each arc can be assigned one of three colors so that at each crossing, either the arcs display exactly three distinct colors or exactly one color. \Cref{fig:tricolorornot} includes an example of a knot that is tricolorable and a knot that is not.

\begin{figure}[h]
    \centering
    \includegraphics[width=0.5\linewidth]{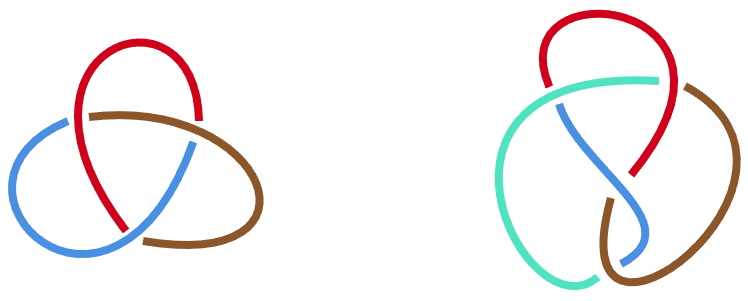}
    \caption{The trefoil knot on the left can be tricolored, but no matter how we try, we cannot tricolor the figure-eight knot on the right. Any coloring following the tricoloring rules of a figure-eight knot requires at least four colors. This is true in any projection of this knot.}
    \label{fig:tricolorornot}
\end{figure}

Categorizing knots by tricolorability is rather limited, since it only allows us to distinguish between two categories of knots-- those that are tricolorable and those that are not. 
Instead, we introduce a knot's \textit{lean~number}, which is the fewest number of colors necessary to color the arcs of a knot such that at each crossing, the arcs display exactly three distinct colors or display exactly one color.
We also require that the lean number of any knot, except the unknot, be at least two. 
For example, since the trefoil knot is tricolorable, its lean number is $3$, as is the case for every tricolorable knot. 
Since the figure-eight knot is not tricolorable, but it can be colored using our rules by four colors, it has lean number $4$. 
It can be verified that the lean number of a knot is invariant under ambient isotopy by showing that it is invariant under Reidemeister moves. 
A knot's lean number allows us to classify knots based on their lean number. This improved tool allows us to distinguish between knots and links much more finely.

When calculating the lean number of knots, it is natural to consider the knot as a collection of arcs and crossings, rather than in its original form. 
By considering each arc of a knot as a vertex and each crossing as an edge (made up of a set of vertices) we derive a natural injection between the set of knot projections (up to Reidemeister moves) and the set of hypergraphs with finitely many vertices. \Cref{fig:knottohyp} illustrates one example of this injection. 
While our main object is to study the lean number of hypergraphs, we return to knots and links in \Cref{sec:LNOSKAL}, where we provide the lean number for $130$ knots and links.

\begin{figure}[h]
    \centering
\includegraphics[width=0.75\linewidth]{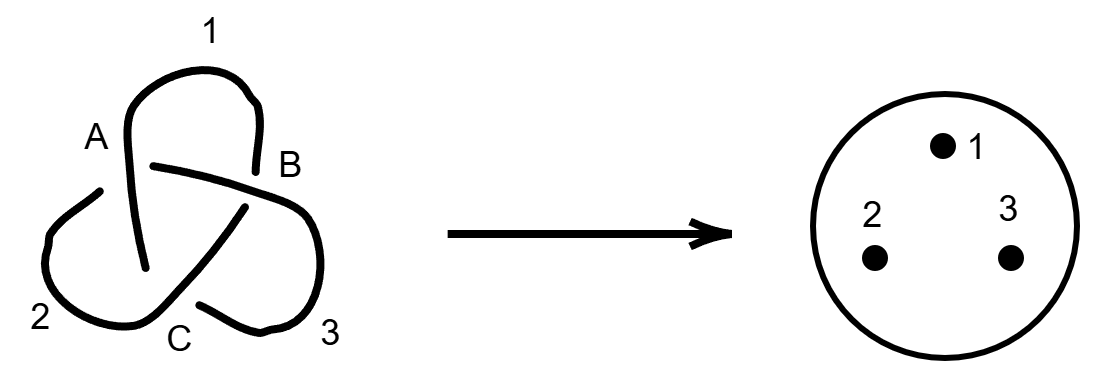}
    \caption{
    Arcs $1,2$, and $3$ are present in the crossing A of the trefoil knot, and so we draw an edge containing vertices $1,2$, and $3$ in the resulting hypergraph.
    The same arcs are present in the crossings labeled B and C, and so the other edges in the resulting hypergraph are $\{2,3,1\}$ and $\{3,1,2\}$, respectively. 
    These edges are considered identical to the first edge.}
    \label{fig:knottohyp}
\end{figure}

\section{Related Coloring Problems}\label{sec:LCARP}
Throughout this article, we make heavy use of the fact that a hypergraph and its reduction share many desirable properties. These are outlined in \Cref{lem:reduction} and \Cref{lem:HypAndItsRed} in \Cref{sec:LCOH}. 
In \Cref{sec:LCOBG}  and \Cref{sec:LCOASC}, we demonstrate that the problem of lean coloring a hypergraph can be described by an analogous coloring of certain classes of bipartite graphs, or equivalently, by certain classes of abstract simplicial complexes. 
\Cref{sec:LCOASC} concludes with \Cref{thm:bijection}, where we establish a bijection between each of the objects discussed in \Cref{sec:LCARP}.

\subsection{The Reduction of a Hypergraph}\label{sec:LCOH}

We begin with some definitions.

\begin{definition}\label{def:reduction}
    A \textit{reduced} hypergraph is one in which no edge is contained in another \cite{Reduction}. 
The \textit{reduction} of a hypergraph $H$ is defined to be $H^- = (V,E^-)$, where 
\[E^- = E \setminus \{e \in E : e \subseteq f \text{ for some } f \in E\setminus\{e\}\}.\]
\end{definition}

Less formally, the reduction of a hypergraph is the hypergraph obtained by removing all edges that are included in another edge, while keeping the vertices as they are. 
\Cref{fig:reduction} displays a hypergraph and its reduction. 
\Cref{def:reduction} speaks of a reduction as unique. \Cref{lem:reduction} demonstrates this.

\begin{figure}[h]
    \centering
    \begin{subfigure}[b]{0.3\textwidth}
        \includegraphics[width=\textwidth]{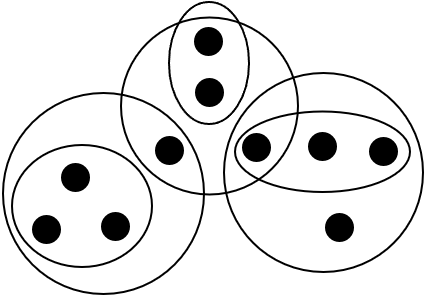}
        \caption{Hypergraph $H$}
        \label{fig:color1}
    \end{subfigure}
    ~
    \begin{subfigure}[b]{0.3\textwidth}
        \includegraphics[width=\textwidth]{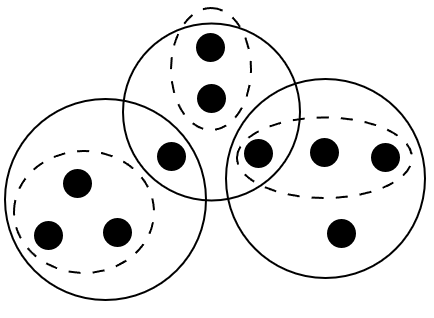}
        \caption{$E'$ drawn with dashed lines}
        \label{fig:color2}
    \end{subfigure}
    ~
    \begin{subfigure}[b]{0.3\textwidth}
        \includegraphics[width=\textwidth]{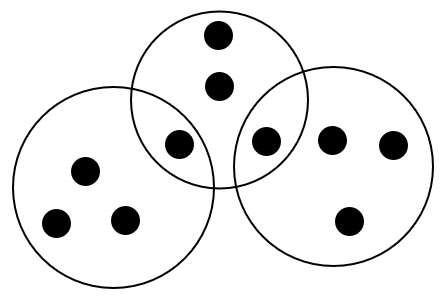}
        \caption{Reduced Hypergraph $H^-$}
        \label{fig:color3}
    \end{subfigure}
    \caption{The hypergraph  in subfigure (A) has six edges, three of which are proper subsets of other edges. The edges to be removed, i.e., $E'=\{e \in E : e \subseteq f \text{ for some } f \in E\setminus\{e\}\}$, are drawn with dashed lines in subfigure (B). The reduced hypergraph $H^-$, displayed in subfigure (C), has the dashed edges removed but keeps each edge in $E$ that is not in $E'$. Note that both the hypergraph $H$ and the reduced hypergraph $H^-$ are connected.}\label{fig:reduction}
\end{figure}

\begin{lemma}
\label{lem:reduction}
Given a hypergraph $H = (V,E)$, its reduction $H^-$ is unique. Additionally, $H$ is connected if and only if $H^-$ is connected.
\end{lemma}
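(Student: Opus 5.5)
Uniqueness is essentially immediate from \Cref{def:reduction}: the set $E' = \{e \in E : e \subseteq f \text{ for some } f \in E\setminus\{e\}\}$ is determined entirely by $E$, with no choices made. So $E^- = E\setminus E'$ is a well-defined function of $H$, and $H^-=(V,E^-)$ is unique. I would also record the fact that makes the reduction behave well: every $e \in E$ is contained in some edge of $E^-$. To see this, take $e\in E$ and choose $f\in E$ with $e\subseteq f$ and $|f|$ maximal; this is possible because $E$ is finite. If $f$ were in $E'$, then $f\subseteq g$ for some $g\neq f$ in $E$. Since $E$ is a set of distinct subsets, $g \ne f$ and $f \subseteq g$ give $f\subsetneq g$, so $|g|>|f|$, contradicting maximality. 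Hence $f\in E^-$. This also shows $E^-\neq\emptyset$, since $E \ne \emptyset$, so $H^-$ is a legitimate hypergraph.

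For connectivity, I take the standard notion: $H$ is connected if for any $u,v\in V$ there is a sequence of vertices $u=v_0,v_1,\dots,v_m=v$ such that each consecutive pair $v_{i-1},v_i$ lies in a common edge of $H$. The direction ($\Leftarrow$) is trivial. Since $E^-\subseteq E$, any such walk in $H^-$ is also a walk in $H$.

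The direction ($\Rightarrow$) uses the fact above. Take a walk in $H$. Each consecutive pair $v_{i-1},v_i$ lies in some $e_i\in E$. Replace $e_i$ by an edge $f_i\in E^-$ with $e_i\subseteq f_i$. Then the pair $v_{i-1},v_i$ also lies in $f_i$, so the same vertex sequence is a walk in $H^-$. The vertex set is unchanged, so every pair of vertices stays joined and $H^-$ is connected.

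The only genuine step is the existence of a maximal superset lying in $E^-$. Its one subtlety is that $E$ is finite and consists of distinct sets, which rules out infinite ascending chains and rules out duplicate edges removing each other. I expect that to be the only point needing care; everything else is bookkeeping.
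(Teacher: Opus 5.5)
Your proof is correct and follows the paper's argument. Uniqueness holds because $E^-$ is determined by $E$ (the paper phrases this as a contradiction), the forward direction replaces each edge of a path with a containing edge that is maximal under inclusion, and the converse uses $E^-\subseteq E$. Your choice of a cardinality-maximal superset also justifies something the paper leaves implicit: such a maximal edge exists and actually lies in $E^-$.
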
 
\begin{proof}
For the first statement, suppose $H_1 = (V,E_1)$ and $H_2 = (V,E_2)$ are reductions of $H = (V,E)$. Then $E_1,E_2 \subseteq E$. If $E_1 \neq E_2$, then without loss of generality, there exists an $e \in E_1$ such that $e \notin E_2$. Since $e \notin E_2$, there must be some $f \in E$ distinct from $e$ such that $e \subseteq f$. But then $e$ would not be in $E_1$, which is a contradiction. 

For the second statement, in the forward direction, suppose $H$ is connected, i.e., for every pair $v_1,v_n \in V$, there exists a path $P: v_1, e_1,v_2,\ldots,v_{n-1},e_{n-1},v_n$ such that each $e_k$ contains the vertices $v_k$ and $v_{k+1}$ for $1 \leq k \leq n-1$. We modify the path $P$ to a new path $P'$ in the following way: for each $e_k$, substitute in any $e_k' \in E$ that contains $e_k$ and is maximal with respect to inclusion. Then $P'$ is still a path from $v_1$ to $v_n$, since each edge $e_k'$ in $P'$ still contains $v_k$ and $v_{k+1}$ for $1 \leq k \leq n-1$. Further, $P'$ is a path from $v_1$ to $v_n$ in the reduction $H^-$, since it only contains edges that are kept when $H$ is reduced. 
It follows that $H^-$ is connected.

For the converse, suppose the reduction $H^-$ is connected, i.e., for every pair of vertices $v_1,v_n \in V$, there exists a path $P$ between them in $H^-$. After we add in edges to $H^-$ to reconstruct $H$, the same path $P$ can be used to connect $v_1$ and $v_n$ in $H$. It follows that if $H^-$ was connected, then so is $H$. 
\end{proof}

Our next result shows that $H$ and its reduction $H^-$ have the same lean number.

\begin{lemma} \label{lem:HypAndItsRed}
The lean colorings of the hypergraph $H=(V,E)$ are exactly the lean colorings of its reduction $H^-=(V,E^-)$. Consequently, $\Lean(H)=\Lean(H^-)$.
\end{lemma}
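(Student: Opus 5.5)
We show the two sets of lean colorings coincide, $\mathcal{L}(H)=\mathcal{L}(H^-)$, by double inclusion. The equality $\Lean(H)=\Lean(H^-)$ then follows at once: both lean numbers are the minimum of the same set $\{|L| : L\in\mathcal{L}(H)\}=\{|L| : L\in\mathcal{L}(H^-)\}$. Since both hypergraphs have the same vertex set $V$, a coloring is a function $V\to\mathbb{Z}^+$ in either case, and the condition $|L(V)|\ge 2$ is the same for both. So only the edge conditions need comparing.

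The inclusion $\mathcal{L}(H)\subseteq\mathcal{L}(H^-)$ is immediate, because $E^-\subseteq E$. Any $L$ satisfying $|L(e)|\in\{1,|e|\}$ for all $e\in E$ satisfies it in particular for all $e\in E^-$.

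For the reverse inclusion, take $L\in\mathcal{L}(H^-)$ and an edge $e\in E\setminus E^-$.

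\begin{enumerate}
\item As in the proof of \Cref{lem:reduction}, we choose an edge $f\in E$ with $e\subseteq f$ that is maximal with respect to inclusion. Such an $f$ exists because $E$ is finite.
\item A maximal $f$ is not contained in any other edge of $E$, so $f\in E^-$. Hence $f$ is either monocolored or multicolored under $L$.
\item If $f$ is monocolored, then $L(e)\subseteq L(f)$ is a single color, and since $e\neq\emptyset$ we get $|L(e)|=1$.
\item If $f$ is multicolored, then $L$ is injective on $f$, hence injective on $e\subseteq f$, so $|L(e)|=|e|$.
\end{enumerate}

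In either case $e$ satisfies the lean condition, so $L\in\mathcal{L}(H)$.

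The only points needing care are that a maximal superset exists and lies in $E^-$, which uses the finiteness of $E$, and that empty edges are excluded, which is needed in the monocolored case. The rest is routine.
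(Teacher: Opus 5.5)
Your proof is correct and uses the same double-inclusion argument as the paper. Restricting to $E^-\subseteq E$ is immediate, and for the converse every removed edge lies inside a retained edge, so it inherits being monocolored or multicolored; the paper silently assumes each edge of $E\setminus E^-$ lies in some edge of $E^-$, and your maximal-superset argument using the finiteness of $E$ supplies exactly that missing justification.
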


\begin{proof}
    Let $C_1$ be a lean coloring of $H$ and let $f \in E$. If $f$ does not contain any other edges, then reducing $H$ does not change the coloring of the vertices in the edge $f$ in $H^-$. 
    Suppose $f\in E^-$ such that in $H$, the edge $f$ properly contains another edge, $e\neq f$. 
    Under $C_1$, we have used at least two colors and the edge $f$ is either monocolored or multicolored.
    After the removal of edge $e$, we have still used at least two colors and $f$ is still either multicolored or monocolored. 
    It follows that $C_1$ is a lean coloring of $H^-$.
    
    Conversely, let $C_2$ be a lean coloring of $H^-$ and let $f \in E^-$. Under $C_2$, we have used at least two colors and $f$ is either multicolored or monocolored. 
    After adding any number of subedges to $f$ in order to match the edge set of $H$, the coloring $C_2$ uses at least two colors. Moreover, each edge contained in $f$ is either multicolored or monocolored. It follows that $C_2$ is a lean coloring of $H$. 
\end{proof}

With these results at hand, we now focus on an equivalent coloring problem on bipartite graphs.

\subsection{The Bilean Coloring of Bipartite Graphs}\label{sec:LCOBG}
We now consider the lean coloring problem in the context of certain colorings of bipartite graphs. 
Recall that a bipartite graph is a graph $G = (V,E)$ with vertex set $V$ and edge set $E$, with the additional condition that $V$ can be partitioned into two subsets, $V_1$ and $V_2$, so that each edge in $E$ is incident to one vertex in the left vertex set $V_1$ and one vertex in the right vertex set $V_2$. We denote such a bipartite graph as $G = (V_1,V_2,E)$. The partition of $V$ into sets $V_1$ and $V_2$ is called a \textit{bipartition of $G$}.

Given a hypergraph $H = (V,E)$, we construct $G_H=(V,E,E')$, an associated bipartite graph with vertex set $V\cup E$ and edge set $E'$ called the \textit{incidence graph of $H$} or \textit{Levi graph of $H$}. The bipartite graph $G_H$ is constructed in the following way: 
the set of left vertices of $G_H$ is $V$, the set of right vertices of $G_H$ is $E$, and if $v\in V$ belongs to the edge $e \in E$ in $H$, then $(v,e) \in E'$. 
See \Cref{fig:HypToBip} for an example. 

\begin{remark}\label{rem:bipartite to hypergraph}
Recall that all of our hypergraphs have no empty edges. Hence, we may apply the result of Bahmanian and Sajna \cite[Theorem~3.11]{connectedhyps} which states: A hypergraph $H = (V,E)$ is connected with no empty edges exactly when its incidence graph $G_H$ is connected.
This can also be done in reverse:
given a bipartite graph $G = (V,E)$, with a specified bipartition, there exists a unique hypergraph $H$ such that $G_H=G$, i.e., we may treat $G$ as the incidence graph of a hypergraph $H$. 
This establishes a correspondence between hypergraphs (with no empty edges) and bipartite graphs with specified bipartition.
\end{remark}

\begin{figure}[h]
    \begin{subfigure}[b]{0.5\textwidth}
        \centering
        \includegraphics[width=0.75\textwidth]{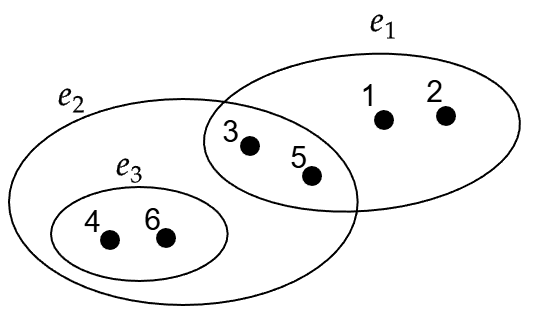}
        \caption{A hypergraph $H$}
        \label{fig:HypToBipA}
    \end{subfigure}
    ~
    \begin{subfigure}[b]{0.5\textwidth}
        \centering 
        \includegraphics[width=0.5\textwidth]{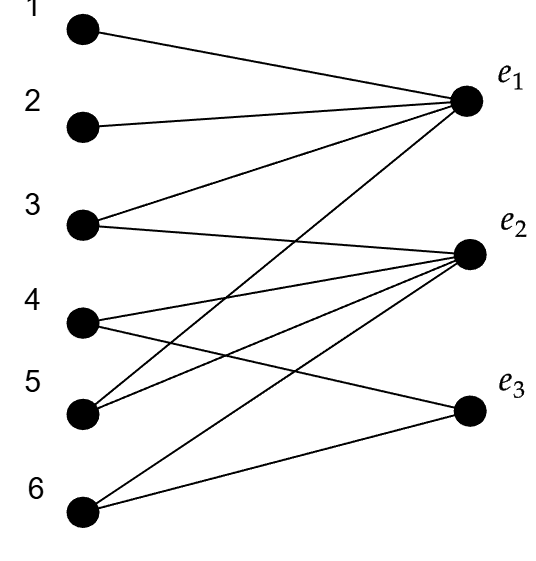}
        \caption{The incidence graph $G_H=(V,E,E')$}
        \label{fig:HypToBipB}
    \end{subfigure}
    \caption{Subfigure (A) is a hypergraph $H=(V,E)=(\{1,2,3,4,5,6\},\{e_1,e_2,e_3\})$.
    Subfigure (B) is the corresponding incidence graph $G_H$, which has six vertices (again denoted $1$ through $6$) in its left vertex set $V$ and three vertices in its right vertex set $E$ (denoted $e_1,e_2,e_3$). Note that in $G_H=(V,E,E')$ there is an edge $e'\in E'$ between a vertex $v\in V$ and a vertex $e\in E$ if $v\in e$ in $H$.}
    \label{fig:HypToBip}
\end{figure}

Given the correspondence between a hypergraph $H$ and its incidence graph $G_H$, we now define \textit{bilean coloring}, which is an analogue of lean coloring defined for bipartite graphs. 
Given a bipartite graph $G~=~(V_1,V_2,E)$, we define a \textit{bilean coloring of $G$} to be an assignment of \underline{at least two} colors to the vertices in $V_1$ so that for each vertex $v \in V_2$, the vertices in $V_1$ adjacent to $v$ display either all distinct colors or exactly one color. We say that the set of all bilean colorings of a bipartite graph $G$ is $\mathcal{B}(G)$. 
The \textit{bilean number of $G$}, denoted $\Bilean(G)$, is the fewest number of colors necessary to bilean color the bipartite graph $G$. \Cref{fig:colored bips} includes bilean colorings of two bipartite graphs.

\begin{figure}[h]
    \begin{subfigure}[b]{0.5\textwidth}
        \centering
        \includegraphics[width=0.25\textwidth]{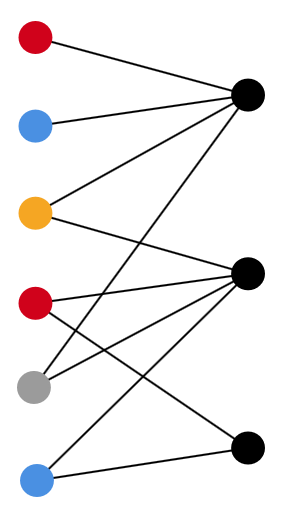}
        \caption{One bilean colored bipartite graph.}
        \label{fig:bipcolor1}
    \end{subfigure}
    ~
    \begin{subfigure}[b]{0.5\textwidth}
        \centering \includegraphics[width=0.25\textwidth]{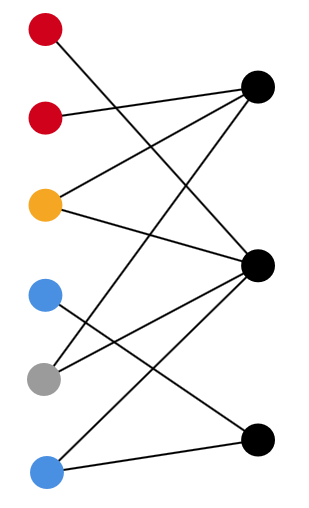}
        \caption{Another bilean colored bipartite graph.}
        \label{fig:bipcolor2}
    \end{subfigure}
    \caption{An illustration of two bilean colored bipartite graphs. The vertices adjacent to each black vertex present either all distinct colors or exactly one color. Note each bilean coloring uses at least two colors, as minimally required.}
    \label{fig:colored bips}
\end{figure}

Given \Cref{lem:HypAndItsRed}, it is preferable and sufficient to work with reduced hypergraphs. Hence, we define a property which for bipartite graphs is analogous to being a reduced hypergraph. 

\begin{definition}\label{def:neighbor}
    Let $G = (V_1,V_2,E)$ be a connected and simple bipartite graph and let $N_G(g)$ be the set of vertices in $V_1$ adjacent to $g \in V_2$. We define $G$ to be \textit{reduced} if, for each distinct pair $g,g' \in V_2$, it is the case that $N_G(g) \not \subseteq N_G(g')$. 
\end{definition}

For example, in \Cref{fig:HypToBip}, the graph in subfigure (B) is not reduced 
since $N_G(e_3)=\{4,6\}\subseteq N_G(e_2)=\{3,4,5,6\}$,
and the graph in \Cref{fig:colored bips} subfigure (B) is reduced as no neighborhood of a right vertex is contained in a neighborhood of another right vertex. 
We use \Cref{def:neighbor} to arrive at  the following result.

\begin{lemma}\label{lem:ConRedImpliesRed}
A hypergraph $H$ is reduced if and only if its incident graph $G_H$ is reduced.
\end{lemma}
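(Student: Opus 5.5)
The plan is to unwind both definitions and observe that they say the same thing, by identifying neighborhoods in $G_H$ with edges of $H$. By construction of the incidence graph $G_H=(V,E,E')$, for every right vertex $e\in E$ we have $N_{G_H}(e)=\{v\in V : (v,e)\in E'\}=\{v\in V: v\in e\}=e$. So the map $e\mapsto N_{G_H}(e)$ is the identity on $E$, viewed as subsets of $V$. Because $E\subseteq\mathcal{P}(V)$ is a set, distinct right vertices $g\neq g'$ of $G_H$ are distinct edges $e\neq f$ of $H$, and conversely.

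Next we compare the two conditions directly. By \Cref{def:reduction}, $H$ is reduced exactly when there are no distinct $e,f\in E$ with $e\subseteq f$. By \Cref{def:neighbor}, $G_H$ is reduced exactly when there are no distinct $g,g'\in E$ with $N_{G_H}(g)\subseteq N_{G_H}(g')$. Substituting $N_{G_H}(e)=e$ turns the second condition into the first word for word. For the forward direction, a failure of reducedness in $G_H$ produces distinct edges $e\subseteq f$ in $H$. For the reverse, a pair $e\subseteq f$ in $H$ produces a failure in $G_H$. Each direction is then a short contrapositive.

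The main obstacle is a hypothesis mismatch rather than the combinatorics. \Cref{def:neighbor} is stated only for connected simple bipartite graphs, while the lemma says nothing about connectivity. Simplicity of $G_H$ is automatic, since $E'$ is a set of pairs $(v,e)$. For connectivity, I would either read the lemma under the standing assumption that $H$ is connected, and then invoke \Cref{rem:bipartite to hypergraph} (the result of Bahmanian and Sajna, using that $H$ has no empty edges) to conclude that $G_H$ is connected so \Cref{def:neighbor} applies. Alternatively, I would note that the neighborhood-containment condition makes sense for any bipartite graph, so the equivalence holds verbatim without connectivity.
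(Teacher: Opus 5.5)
Your proposal is correct and is essentially the paper's proof: both arguments observe that $N_{G_H}(e)=e$ for every right vertex $e$, so the two non-containment conditions coincide, and both handle each direction by contrapositive. Your extra remark is a fair point the paper's proof passes over silently: \Cref{def:neighbor} is stated only for connected simple bipartite graphs, so the lemma needs either a standing connectivity assumption (then $G_H$ is connected by \Cref{rem:bipartite to hypergraph}) or the obvious extension of the definition to all bipartite graphs, and either of your two readings closes that gap.
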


\begin{proof}
    Since $H$ is reduced, there is no distinct pair of edges $e,e'\in E$ such that $e \subseteq e'$. 
    In the incidence graph, $G_H = (V,E,E')$, this means that there are no vertices $e,e' \in E$ such that $N_{G_H}(e) \subseteq N_{G_H}(e')$. It follows that $G_H$ is reduced.
    For the converse, suppose $H$ is not reduced, that is, there exist a pair of distinct and nonempty edges $e,e'\in E$ such that $e \subseteq e'$. It would then be the case that $N_{G_H}(e) \subseteq N_{G_H}(e')$, and so $G_H$ is not reduced.
\end{proof}

In the same way that determining the lean number of a hypergraph is deeper when we restrict our attention to connected and reduced hypergraphs, our exploration of bilean numbers is deeper when we restrict our attention to connected and reduced bipartite graphs. 
Henceforth, we say a hypergraph or a bipartite graph is {CR} if it is both connected and reduced.
We recall that the number of non-isomorphic connected bipartite graphs that are labeled (resp.\ unlabeled) on a given number of vertices is known, for their enumeration \newline see \cite[OEIS \seqnum{A001832}]{OEISConnLab} (resp.\ \cite[OEIS \seqnum{A005142}]{OEISConnUnlab}). 
The number of CR
bipartite graphs on $n$ vertices is not obvious. 
What we do know, from \Cref{lem:ConRedImpliesRed} and \Cref{rem:bipartite to hypergraph}, is that the set of
CR bipartite graphs with $n$ vertices in the left set 
is in bijection with the set of CR hypergraphs on $n$ vertices.
In the next section, we establish a new bijection between CR hypergraphs on $n$ vertices and the number of connected simplicial complexes on $n$ vertices. 

\subsection{The Silean Coloring of Abstract Simplicial Complexes}\label{sec:LCOASC}
Throughout, $X$ is a finite set.
Recall that an abstract simplicial complex is a pair of sets $(X,S)$, where $S \subseteq \mathcal{P}(X)$, which satisfies the following conditions:
\begin{itemize}
    \item if $T \in S$ and $A \subseteq T$, then $A \in S$, and 
    \item each singleton subset of $X$ is in $S$. 
    \end{itemize}
The elements of $X$ are called \textit{points} of the abstract simplicial complex. By considering $X$ as a set of vertices and $S$ as a set of edges, we may instead consider the abstract simplicial complex $(X,S)$ as a hypergraph $H = (X,S)$. 
For example, \Cref{fig:ASC} displays an abstract simplicial complex on four points along with its corresponding hypergraph. 

\begin{figure}[h]
    \begin{subfigure}[b]{0.5\textwidth}
        \centering
        \includegraphics[width=0.5\textwidth]{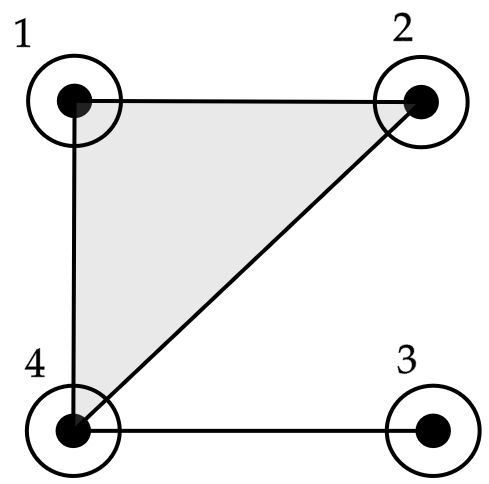}
        \caption{ }
        \label{fig:ASC and Hyp A}
    \end{subfigure}
    ~
    \begin{subfigure}[b]{0.5\textwidth}
        \centering \includegraphics[width=0.5\textwidth]{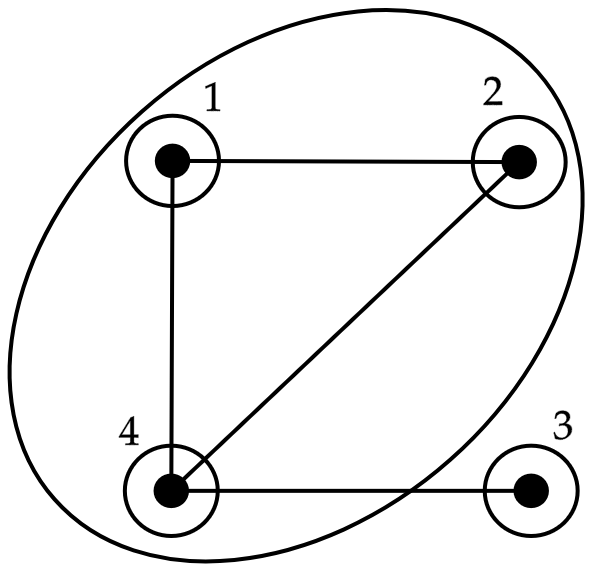}
        \caption{ }
        \label{fig:ASC and Hyp 2}
    \end{subfigure}
    \caption{Subfigure (A) is an illustration of the abstract simplicial complex $(X=\{1,2,3,4\},$ $S=\{\{1\},\{2\},\{3\},\{4\},\{1,2\},\{1,4\},\{2,4\},\{3,4\},\{1,2,4\}\})$. Circles around each point indicate a singleton in $S$. Line segments between pairs of points indicate a two-element set in $S$. The shaded area indicates a three-element set in $S$. Subfigure (B) is an illustration of the corresponding hypergraph $H=(X,S)$.}
    \label{fig:ASC}
\end{figure}

The connectivity of an abstract simplicial complex is strongly tied to the connectivity of its associated hypergraph, as the following result shows.  

\begin{lemma}
\label{lem:ASCconn}
An abstract simplicial complex is connected if and only if its associated hypergraph is connected.
\end{lemma}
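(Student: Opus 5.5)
The plan is to fix what ``connected'' means for an abstract simplicial complex $(X,S)$: any two points $x,y\in X$ are joined by a chain $x=x_0,x_1,\ldots,x_m=y$ with $\{x_{i-1},x_i\}\in S$ for each $i$. In other words, its $1$-skeleton, the graph on $X$ whose edges are the $2$-element members of $S$, is connected. This is the standard combinatorial notion, and it agrees with connectivity of the geometric realization. The associated hypergraph $H=(X,S)$ is connected in the path sense used in the proof of \Cref{lem:reduction}. With these definitions, each direction amounts to converting one kind of path into the other. The key tool is the downward-closure axiom of an abstract simplicial complex.

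\textbf{Forward direction.} Assume $(X,S)$ is connected and let $x,y\in X$. There is a chain $x=x_0,\ldots,x_m=y$ with each $\{x_{i-1},x_i\}\in S$. Since $S$ is the edge set of $H$, the sequence
\[
x_0,\{x_0,x_1\},x_1,\ldots,\{x_{m-1},x_m\},x_m
\]
is already a hypergraph path in $H$. Hence $H$ is connected. This direction needs nothing beyond unpacking the definitions.

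\textbf{Converse direction.} Assume $H=(X,S)$ is connected, and take a hypergraph path $x=v_1,e_1,v_2,\ldots,e_{n-1},v_n=y$ with $v_k,v_{k+1}\in e_k\in S$. There are two cases for each consecutive pair.
\begin{itemize}
\item If $v_k\neq v_{k+1}$, then $\{v_k,v_{k+1}\}\subseteq e_k$. By the first axiom, $\{v_k,v_{k+1}\}\in S$, so the pair is joined by a $1$-simplex.
\item If $v_k=v_{k+1}$, we simply delete the repetition.
\end{itemize}
Doing this for every $k$ produces a chain of $1$-simplices from $x$ to $y$, so $(X,S)$ is connected. If $x=y$ there is nothing to prove, since the singleton axiom guarantees every point lies in $S$.

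I expect the main obstacle to be definitional rather than technical. The paper has not stated what connectivity of an abstract simplicial complex means, so the proof must fix it explicitly. If the intended notion is instead ``any two points lie in a chain of simplices with consecutive simplices intersecting,'' I would add a short step. Such a chain of simplices, with a chosen point in each intersection, is literally a hypergraph path, and conversely. The downward-closure argument above then shows that this notion also coincides with $1$-skeleton connectivity. That makes the lemma robust to whichever standard definition is adopted.
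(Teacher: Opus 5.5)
Your proof is correct; it differs from the paper's mainly in which definition of connectivity for $(X,S)$ you adopt.

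The paper calls an abstract simplicial complex connected when any two points are joined by a sequence $x_1,s_1,x_2,\ldots,s_{n-1},x_n$ with $x_k,x_{k+1}\in s_k\in S$. That is word for word the hypergraph path definition, so both directions are immediate and neither simplicial-complex axiom is ever used.

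You instead adopt $1$-skeleton connectivity. This keeps the forward direction immediate but gives the converse real content: downward closure is exactly what lets you replace each hyperedge $e_k$ on a path by the $1$-simplex $\{v_k,v_{k+1}\}$.

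Your route shows the lemma holds for the standard notion, which is equivalent to connectivity of the geometric realization. Your closing remark also shows the paper's chain-of-simplices notion agrees with it, so your argument covers the paper's. The paper's route is shorter, but it makes the statement essentially a definition.
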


\begin{proof}
   A connected abstract simplicial complex $C = (X,S)$ is one where there exists a path joining every pair of distinct $x_1,x_n \in X$, i.e., a sequence of the form $x_1,s_1,x_2,s_2, \ldots, s_{n-1}, x_n$, where each set $s_k \in S$ contains $x_k, x_{k+1} \in X$. 
   When treated as a hypergraph $H = (X,S)$, it follows that there exists a path between each pair of distinct vertices in $H$, and so $H$ is connected. 
   In the reverse direction, let a connected hypergraph $H = (V,E)$ correspond to an abstract simplicial complex $D = (V,E)$. Then there exists a path between each pair of distinct vertices in $H$, and so there exists a path between each pair of distinct vertices in $D$. Hence, $D$ is connected. 
\end{proof}

Each hypergraph has a unique reduction, as we proved in \Cref{lem:reduction}. Also, to each hypergraph, we may introduce each subset of each edge to the edge set to construct a new hypergraph. We define this next.

\begin{definition}
    \label{def:accession}
    Let $H = (V,E)$ be a hypergraph. The \textit{accession of $H$} is $H^+ = (V,E^+)$, where \[E^+ =\left( \bigcup_{e \in E} \mathcal{P}(e)\right) \cup\{\{v\}:v\in V\mbox{ is an isolated vertex}\}.\] 
\end{definition}

\Cref{fig:accessionexample} shows an example of a hypergraph and its accession.

\begin{figure}[h!]
    \centering
    \begin{subfigure}[b]{0.5\textwidth}
        \centering
        \includegraphics[width=0.5\textwidth]{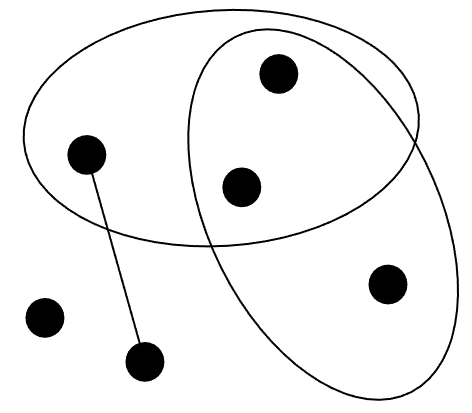}
        \caption{}
        \label{fig:HypToAccessA}
    \end{subfigure}
    ~
    \begin{subfigure}[b]{0.5\textwidth}
        \centering \includegraphics[width=0.5\textwidth]{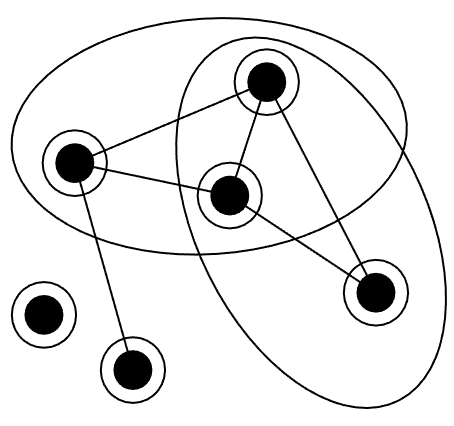}
        \caption{}
        \label{fig:HypToAcess2}
    \end{subfigure}
    \caption{In subfigure (A) we illustrate a hypergraph and we use a straight line between two vertices to signify an edge of size two.
    The accession of the hypergraph $H$ in subfigure (A) is the hypergraph $H^+$ in subfigure (B). 
    Each subset of each edge of $H$ is included in the edge set of the accession $H^+$.}
    \label{fig:accessionexample}
\end{figure}

As stated, \Cref{def:accession} refers to each accession as unique. Next, we prove this is indeed the case.

\begin{lemma}
    \label{lem:!access}
    For a given hypergraph $H$, its accession $H^+$ is unique. Additionally, $H$ is connected if and only if $H^+$ is connected.
\end{lemma}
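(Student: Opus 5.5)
Uniqueness is immediate from \Cref{def:accession}: $E^+$ is given by an explicit formula in terms of $E$ and $V$, namely the union of the power sets of the edges together with the singletons of isolated vertices. So $E^+$ is determined entirely by $H$, and any two accessions of $H$ share the vertex set $V$ and the edge set $E^+$. To phrase it in the style of \Cref{lem:reduction}, I would suppose $H_1=(V,E_1)$ and $H_2=(V,E_2)$ are both accessions of $H$ and check that each $e\in E_1$ lies in $E_2$, and conversely. An element of $E_1$ is either a subset of some $f\in E$ or a singleton $\{v\}$ with $v$ isolated in $H$, and both descriptions refer only to $H$. One small caveat: $\mathcal{P}(e)$ contains $\emptyset$, and the paper forbids empty edges, so I would note that $\emptyset$ is tacitly discarded (or read $\mathcal{P}(e)$ as the nonempty subsets). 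This does not affect uniqueness.

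For connectivity, the forward direction mirrors the converse in \Cref{lem:reduction}. Every $e\in E$ lies in $\mathcal{P}(e)\subseteq E^+$, so $E\subseteq E^+$ with the same vertex set. Hence any path $v_1,e_1,v_2,\ldots,e_{n-1},v_n$ in $H$ is also a path in $H^+$, and $H^+$ is connected whenever $H$ is.

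For the converse, suppose $H^+$ is connected and take a path $v_1,e_1',v_2,\ldots,e_{n-1}',v_n$ in $H^+$ with $v_k,v_{k+1}\in e_k'$. I would replace each $e_k'$ by an edge of $H$ containing it.
\begin{itemize}
\item If $e_k'\subseteq f$ for some $f\in E$, substitute $f$; it still contains $v_k$ and $v_{k+1}$.
\item Otherwise $e_k'=\{v\}$ is a singleton for an isolated vertex $v$. Then $v_k=v_{k+1}=v$, so this step of the path can simply be deleted.
\end{itemize}
The result is a walk in $H$ from $v_1$ to $v_n$, so $H$ is connected. This is the same substitution trick used in the forward direction of \Cref{lem:reduction}.

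The only delicate point is the singleton edges added for isolated vertices. They can never link two distinct vertices, so they create no new connections; I expect handling them (together with the empty-set caveat) to be the main, though minor, obstacle.
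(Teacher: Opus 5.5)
Your proof is correct. Uniqueness is handled the same way in substance: the paper phrases it as a contradiction argument, but it amounts to your observation that $E^+$ is given by an explicit formula in $H$.

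For connectivity you take a different route. The paper does not transfer paths directly. It asserts that $(H^+)^- = H^-$ and then applies \Cref{lem:reduction} twice, once to $H$ and once to $H^+$, to get that $H$ is connected $\iff$ $H^-$ is connected $\iff$ $H^+$ is connected. That is shorter, since it reuses the earlier lemma.

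However, the identity $(H^+)^- = H^-$ is only literally true when $H$ has no isolated vertices. A singleton $\{v\}$ for an isolated $v$ lies in no other edge of $H^+$, so it survives the reduction of $H^+$ but is absent from $H^-$. The conclusion is unaffected, since both hypergraphs are then disconnected, but the paper does not address this.

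Your direct argument uses $E \subseteq E^+$ for one direction and replaces each edge of $H^+$ by a containing edge of $H$ for the other. It is self-contained and deals with those isolated-vertex singletons explicitly, so it needs no such caveat.
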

\begin{proof}
    We argue the first statement by contradiction. Suppose $H_1^+=(V,E_1)$ and $H_2^+ = (V,E_2)$ are both accessions of $H = (V,E)$, with $E_1 \neq E_2$. 
    Then without loss of generality, there exists an $e \in E_1^+$ such that $e \notin E_2^+$. 
    This implies that either $e \in E$ (which cannot be the case. since then $e \in E_2^+$) or $e$ is a strict subset of an edge $f \in E$. But then $e$ being a strict subset of $f \in E$ would imply $f$ and its subsets are edges in $E_2^+$, which contradicts $e$ not being in $E_2^+$.
    
    Let us now prove the second statement. Note first that the reduction of $H$, $H^-$, is identically the reduction of $H^+$, i.e., $(H^+)^-$. This is the case since the edges of $H$ that are not in $H^-$ and the edges of $H^+$ that are not in $H^-$ are subsets of edges of $H^-$. Then \Cref{lem:reduction} implies $H$ is connected if and only if $H^-$ is connected, which occurs if and only if $H^+$ is connected.
\end{proof}

This notion of accession is constructed in such a way to ensure that a given hypergraph can become \textit{downward-closed}, i.e., a hypergraph in which each subset of each edge is also an edge. 
An alternative way to represent abstract simplicial complexes is via downward-closed hypergraphs. 
This connection allows us to present a proof that CR hypergraphs are in bijection with connected abstract simplicial complexes.

\begin{lemma}
    \label{lem:hypsequaltoASC}
    Let $n \in \mathbb{N}=\{1,2,3,\ldots \}$. The number of CR hypergraphs on $n$ vertices is equal to the number of connected abstract simplicial complexes with $n$ points.
\end{lemma}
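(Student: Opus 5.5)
The plan is to exhibit an explicit bijection between CR hypergraphs on the vertex set $V$ with $|V|=n$ and connected abstract simplicial complexes with point set $V$, using accession in one direction and reduction in the other. Define $\Phi(H)=H^+$ for a CR hypergraph $H=(V,E)$, viewed as the pair $(V,E^+)$. Define $\Psi(X,S)=(X,S)^-$ for a connected abstract simplicial complex $(X,S)$, viewed as a hypergraph. I will show that both maps land in the correct class and that they are mutually inverse.

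First I check that $\Phi$ is well defined. By \Cref{def:accession}, $E^+$ is downward closed, except that $\emptyset\in\mathcal{P}(e)$. I will adopt the convention, consistent with the standing assumption $\emptyset\notin E$, that the empty set is discarded. Every singleton lies in $E^+$: a vertex that belongs to some edge has its singleton in $\mathcal{P}(e)$, and an isolated vertex has its singleton added explicitly. So $(V,E^+)$ is an abstract simplicial complex. It is connected by \Cref{lem:!access} combined with \Cref{lem:ASCconn}, and it is unique by \Cref{lem:!access}.

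Next I check that $\Psi$ is well defined. The hypergraph $(X,S)$ is connected by \Cref{lem:ASCconn}. Its reduction is therefore connected by \Cref{lem:reduction}, reduced by construction, and unique. Thus $\Psi$ produces a CR hypergraph on $n$ vertices.

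The main step is showing that the two maps are mutually inverse.

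\textbf{$\Psi\circ\Phi=\mathrm{id}$.} Let $H$ be CR. Every edge of $E^+$ is contained in some edge of $E$. The edges of $E$ are pairwise incomparable, so each of them is maximal in $E^+$. Any other element of $E^+$ is a proper subset of an edge of $E$ and is removed by reduction. One caveat is an isolated singleton $\{v\}$; it would survive reduction, but it is also itself an edge of $E$. Indeed, a connected hypergraph with $n\ge 2$ vertices has no isolated vertices, so this case does not arise. Hence $(E^+)^-=E$, which matches the observation $(H^+)^-=H^-$ in the proof of \Cref{lem:!access}.

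\textbf{$\Phi\circ\Psi=\mathrm{id}$.} Let $(X,S)$ be a complex and let $S^-$ be its set of maximal faces. Since $S$ is finite, every face lies in some maximal face, so $S\subseteq\bigcup_{e\in S^-}\mathcal{P}(e)$. Downward closure gives the reverse inclusion, and the singleton condition is preserved. Therefore $(S^-)^+=S$.

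The main obstacle I expect is the boundary bookkeeping rather than the core argument. There are three points to handle. The first is the empty set, since abstract simplicial complexes usually contain it while hypergraphs here do not. The second is the isolated-vertex clause in \Cref{def:accession}. The third is the case $n=1$, which lies outside the standing assumption $|V|\ge 2$. There the single CR hypergraph $(\{v\},\{\{v\}\})$ should correspond to the single complex on one point, and I would treat it separately by direct inspection.

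Finally, a bijection on labeled objects with a fixed $n$-element set transports isomorphisms in both directions, because both constructions commute with relabeling of vertices. The counts therefore agree whether objects are counted labeled or up to isomorphism.
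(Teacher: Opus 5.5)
Your proposal is correct and follows the same route as the paper: accession sends a CR hypergraph to a connected complex, reduction sends a connected complex back to a CR hypergraph, and connectivity is transported via \Cref{lem:reduction}, \Cref{lem:ASCconn}, and the accession lemma. You go a step further than the paper by checking $(E^+)^-=E$ and $(S^-)^+=S$, and by flagging the empty-set and $n=1$ conventions; these checks are what make the correspondence a genuine bijection, whereas the paper's proof only verifies that each map lands in the right class.
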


\begin{proof}
    Let $H = (V,E)$ be a CR hypergraph and $|V| = n$. Its accession $H^+ = (V,E^+)$ satisfies the following property: if $e \in E^+$ and $f \subseteq e$, then $f \in E^+$. Additionally, for each $v \in V$, the singleton $\{v\}$ is in $E^+$ (since $H$ is connected). Also $H^+$ is uniquely determined, as shown in \Cref{lem:!access}. It follows that the pair of sets $(V,E^+)$ satisfies the definition of an abstract simplicial complex, and is connected by \Cref{lem:ASCconn}. 
    
    For the converse, let $C = (X,S)$ be a connected abstract simplicial complex. By its definition, $S$ contains all singleton subsets of $X$ and is closed under taking subsets. We may consider $C$ as a hypergraph with $X$ as the vertex set and $S$ as the edge set. The reduction of $C$ is then a hypergraph that is connected by \Cref{lem:ASCconn} and uniquely determined by \Cref{lem:reduction}. 
\end{proof}

In \Cref{fig:simplex to hyp}, we present each connected abstract simplicial complex on three points and give their associated hypergraphs.

\begin{figure}[h!]
    \centering
    \begin{tabular}{rrr}
    \begin{subfigure}[b]{0.3\textwidth}
        \centering
        \includegraphics[width=0.75\textwidth]{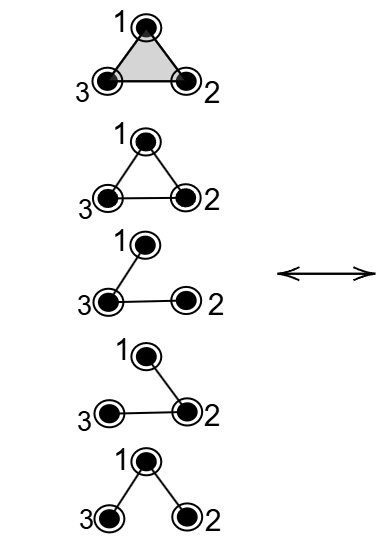}
        \caption{Abstract simplicial complexes on three points}
        \label{fig:bipcolor1}
    \end{subfigure}
    &
    \begin{subfigure}[b]{0.3\textwidth}
    \centering
\includegraphics[width=0.75\textwidth]{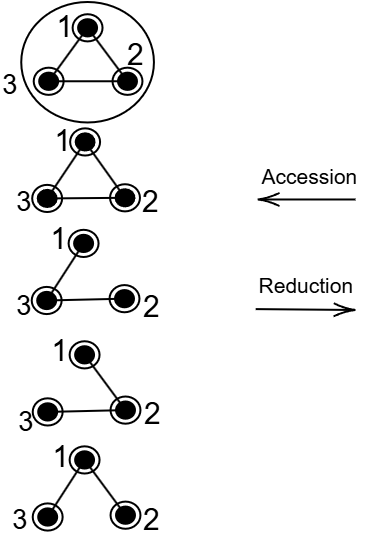}
        \caption{Abstract simplicial complexes considered as hypergraphs}
        \label{fig:bipcolor2}
    \end{subfigure}
    &
    \begin{subfigure}[b]{0.3\textwidth}
    \centering\includegraphics[width=0.3\textwidth]{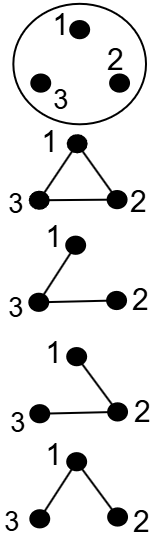}
        \caption{Associated\\ hypergraphs}
        \label{fig:bipcolor2}
    \end{subfigure}
    
    \end{tabular}
    \caption{In subfigure (A), we illustrate all of the distinct connected abstract simplicial complexes on three points. Each circle around a point indicates a singleton set, each line segment indicates a set of two points, and the shaded area represents a set of three points. In subfigure (B), we illustrate the associated hypergraph for each corresponding abstract simplicial complex in subfigure (A). In subfigure (B), the circles around each point represent an edge containing exactly one vertex, the line segments indicate an edge with two vertices, and the large circle represents an edge containing three vertices. 
    In subfigure (C), we reduce each hypergraph in subfigure (B). Accession allows us to produce a unique abstract simplicial complex for each connected reduced hypergraph.}
    \label{fig:simplex to hyp}
\end{figure}

\newpage The following result collects our main findings so far.

\begin{theorem}\label{thm:bijection}
    Let $n \in \mathbb{N}$. The following are equal:
    \begin{enumerate}
        \item The number of connected and reduced hypergraphs on $n$ vertices.
        \item The number of connected and reduced bipartite graphs $(V_1,V_2,E)$, where $|V_1|=n$.
        \item The number of connected abstract simplicial complexes with $n$ points. 
    \end{enumerate} 
\end{theorem}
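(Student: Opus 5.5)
The plan is to prove the two equalities (1)=(2) and (1)=(3) separately by exhibiting explicit bijections, and then conclude by transitivity. Both bijections are essentially assembled from results already established, so the proof is mostly a matter of checking that the maps are well defined and mutually inverse.

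For (1)=(2), I will use the incidence graph construction $H \mapsto G_H$, with left set $V$ and right set $E$. By \Cref{rem:bipartite to hypergraph}, this gives a bijection between hypergraphs on vertex set $V$ (with no empty edges) and bipartite graphs with specified bipartition whose left set is $V$. Since $|V_1| = n$ corresponds to $|V| = n$, it remains to check that the map preserves both properties. Connectivity is preserved by the cited result of Bahmanian and Sajna. Reducedness is preserved by \Cref{lem:ConRedImpliesRed}. Hence CR hypergraphs on $n$ vertices correspond exactly to CR bipartite graphs with $|V_1|=n$.

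One technical point needs care here: a right vertex of degree zero in a bipartite graph would correspond to an empty edge. I will dispose of it using connectivity. For $n \geq 1$, a connected bipartite graph with nonempty left side has no isolated right vertices, so every right vertex has a nonempty neighborhood.

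For (1)=(3), I will use the map $H \mapsto H^+$ from CR hypergraphs to connected abstract simplicial complexes, and the map $C \mapsto C^-$ in the other direction. \Cref{lem:hypsequaltoASC} already shows that both maps land in the right sets, and \Cref{lem:reduction} and \Cref{lem:!access} show they are well defined. The remaining work is to verify that the two maps are mutually inverse.

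\begin{itemize}
\item $(H^+)^- = H$ for $H$ reduced. The maximal elements of $E^+$ are exactly the maximal edges of $E$, and these are all of $E$ because $H$ is reduced. One caveat: if $H$ has a single vertex and the edge $\{v\}$, the singletons added in the accession are already subsets of edges, so nothing changes. Here connectivity, together with $|V|\ge 2$ or the one-point case handled directly, ensures that no isolated singletons are introduced.
\item $(C^-)^+ = C$. The set $C^-$ consists of the facets of $C$. Since $C$ is downward closed and every element of $S$ lies in some facet, taking all subsets of facets recovers $S$. Singletons are recovered because each point lies in some facet.
\end{itemize}

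I expect the main obstacle to be the bookkeeping around the empty set and singletons. Depending on convention, an abstract simplicial complex may contain $\emptyset$, since $\emptyset$ is a subset of every face. Meanwhile $\mathcal P(e)$ contains $\emptyset$, but hypergraphs are assumed to have no empty edges. I would first fix the convention that $\emptyset$ is excluded from $E^+$ and from $S$, so that the maps are honestly inverse. I would then note that this choice does not affect the counting.
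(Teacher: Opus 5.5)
Your proposal is correct and follows essentially the same route as the paper. The paper assembles the theorem from \Cref{rem:bipartite to hypergraph} and \Cref{lem:ConRedImpliesRed} for the equality of items (1) and (2), and from the accession/reduction correspondence of \Cref{lem:hypsequaltoASC} for items (1) and (3). Your explicit checks add what the paper leaves largely implicit: that $(H^+)^-=H$ and $(C^-)^+=C$, and that connectivity rules out right vertices with empty neighborhood. The paper's proof of \Cref{lem:hypsequaltoASC} shows the two maps are well defined but never verifies that they are mutually inverse.
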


\Cref{thm:bijection} allows us to take advantage of the fact that the number of connected abstract simplicial complexes is known and appears as OEIS sequence \cite[\href{https://oeis.org/A048143}{A048143}]{OEISConnSimp}, and therefore we also know the size of the other sets. Since each reduced hypergraph (resp. reduced bipartite graphs of the above form, abstract simplicial complex) is a union of CR hypergraphs (resp. CR bipartite graphs of the above form, connected abstract simplicial complexes), we have the following corollary:

\begin{corollary}\label{cor:bijection}
    Let $n \in \mathbb{N}$. The following are equal:
    \begin{enumerate}
        \item The number of reduced hypergraphs on $n$ vertices.
        \item The number of reduced bipartite graphs $(V_1,V_2,E)$, where $|V_1|=n$.
        \item The number of abstract simplicial complexes with $n$ points. 
    \end{enumerate} 
\end{corollary}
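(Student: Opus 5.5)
The plan is to derive the corollary from \Cref{thm:bijection} by decomposing each object into its connected components and then matching components through the bijections already established. The first step is to make those bijections explicit and check that they preserve vertex sets, since \Cref{thm:bijection} is stated only as an equality of counts.
\begin{itemize}
\item Hypergraphs and bipartite graphs correspond via $H \mapsto G_H$ (\Cref{rem:bipartite to hypergraph}); reducedness is preserved by \Cref{lem:ConRedImpliesRed}.
\item Hypergraphs and abstract simplicial complexes correspond via accession $H\mapsto H^+$, with inverse given by reduction. That these are mutually inverse follows from the observation in the proof of \Cref{lem:!access} that $(H^+)^- = H^-$, together with $H^-=H$ when $H$ is reduced, and $(C^-)^+=C$ when $C$ is downward-closed and contains all singletons.
\end{itemize}
All three maps fix the underlying vertex (point) set $V$.

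Next, I would drop the connectivity hypothesis. Given a reduced hypergraph $H$ on $V$, partition $V$ into the vertex sets $V_1,\dots,V_m$ of its connected components. Every edge lies in exactly one $V_i$, because edges are nonempty and connect their vertices, so $H$ is the disjoint union of CR hypergraphs $H_i$ on the $V_i$. Conversely, a set partition of $V$ together with a CR hypergraph on each block reassembles into a reduced hypergraph. Reducedness is preserved in this direction because containment $e\subseteq f$ can only occur between edges in the same block. This gives a bijection between reduced hypergraphs on $V$ and pairs (set partition of $V$, CR structure on each block).

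The same decomposition holds for abstract simplicial complexes: every simplex lies in a single component, and singletons lie in their own block. It also holds for bipartite graphs with fixed left set $V_1=V$: each right vertex has its neighbourhood inside one component, and a reduced graph has no isolated right vertices. In each case the count is
\[\sum_{\pi} \prod_{B\in\pi} c(B),\]
where $\pi$ ranges over set partitions of $V$ and $c(B)$ is the number of connected (CR) objects on the block $B$. By \Cref{thm:bijection}, $c(B)$ depends only on $|B|$ and is the same for all three families, so the three sums coincide.

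The main obstacle I expect is conventions for degenerate components, specifically isolated vertices.
\begin{itemize}
\item A single vertex $v$ must count as one CR hypergraph on $\{v\}$, realized by the edge $\{v\}$. I would fix this by convention, with the edge $\{v\}$ matching the singleton simplex and a right vertex of degree one.
\item I must also check that the edgeless hypergraph is excluded, or that edgeless bipartite graphs are handled consistently, using $|E|\ge1$.
\item If labeled versus unlabeled counting matters, I would argue that the component-wise bijection is natural, since it commutes with relabeling $V$. It therefore also descends to isomorphism classes, where the count becomes a multiset of component classes rather than a sum over set partitions.
\end{itemize}
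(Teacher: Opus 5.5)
Your strategy is the paper's own. Its entire justification is the remark that every reduced object is a union of connected ones, combined with \Cref{thm:bijection}, and your sum over set partitions (or over multisets of component classes in the unlabeled case) is the right way to make that remark precise.

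The step that fails is ``$H$ is the disjoint union of CR hypergraphs $H_i$.'' The paper's conventions require only $|E|\geq 1$ and no empty edges, so a reduced hypergraph may contain a vertex lying in no edge. That vertex's component is $(\{v\},\emptyset)$, not the one-vertex CR hypergraph $(\{v\},\{\{v\}\})$ you fix by convention. The condition $|E|\geq1$ is global and does not rule out such components.

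This makes the statement false as written. On $V=\{1,2\}$ the reduced hypergraphs are $E=\{\{1\}\}$, $\{\{2\}\}$, $\{\{1\},\{2\}\}$ and $\{\{1,2\}\}$. That is four labeled and three up to isomorphism, and their incidence graphs give the same counts in (2). There are only two abstract simplicial complexes on two points.

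Your explicit inverse breaks at the same spot. \Cref{def:accession} adds $\{v\}$ for each isolated $v$, so $H=(\{1,2\},\{\{1\}\})$ has $(H^+)^-=(\{1,2\},\{\{1\},\{2\}\})\neq H$. Hence the identity $(H^+)^-=H^-$ you take from the proof of \Cref{lem:!access} fails for hypergraphs with isolated vertices.

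The paper's one-line argument has the same hole, and the repair is a hypothesis, not a convention. Restrict (1) to reduced hypergraphs in which every vertex lies in some edge. Restrict (2) to reduced bipartite graphs with no isolated left vertex, extending \Cref{def:neighbor} (which the paper states only for connected graphs) in the evident way. Then every component is a CR hypergraph with at least one edge, and a singleton block carries only $(\{v\},\{\{v\}\})$, matching the one-point complex. Your formula $\sum_{\pi}\prod_{B\in\pi}c(|B|)$ then holds for all three families, and the rest of your argument goes through unchanged.
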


Further, the above numbers appear in the OEIS sequence \cite[\href{https://oeis.org/A307249}{A307249}]{OEISConnSimp}. We now consider the lean coloring problem in the context of abstract simplicial complexes. 
To this end, we introduce the following definition.

\begin{definition}
For an abstract simplicial complex $C = (X,S)$,
a \textit{silean coloring} is an assignment of \underline{at least two} colors to the points in $X$ such that the points in each maximal set in $S$ (with respect to inclusion) display either all distinct colors or exactly one color. We say that the set of all silean colorings of an abstract simplicial complex $C$ is $\mathcal{S}(C)$.
The fewest colors necessary to silean color an abstract simplicial complex $C$ is its \textit{silean number}, which is denoted by $\Silean(C)$.   
\end{definition}

In \Cref{fig:ColoredComplex}, we illustrate two silean colorings of an abstract simplicial complex. 

\begin{figure}[h]
    \centering
    \begin{tabular}{rr}
    \begin{subfigure}[b]{0.3\textwidth}
        \centering
        \includegraphics[width=0.75\textwidth]{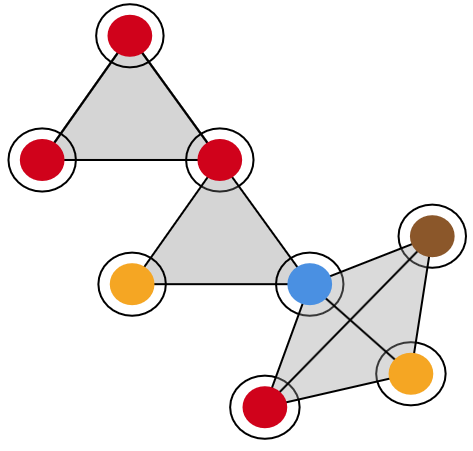}
        \caption{}
        \label{fig:ColoredComplexA}
    \end{subfigure}
    &
    \begin{subfigure}[b]{0.3\textwidth}
    \centering
\includegraphics[width=0.75\textwidth]{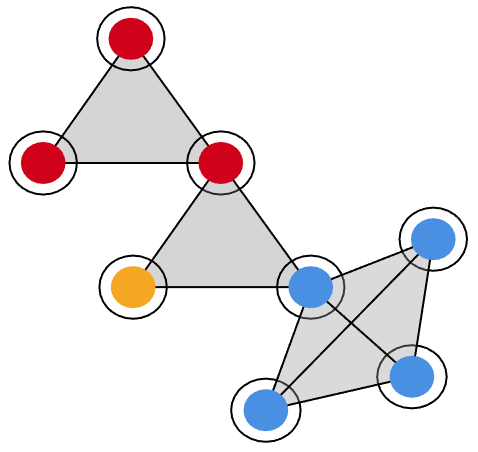}
        \caption{}
        \label{fig:ColoredComplexB}
    \end{subfigure}
    \end{tabular}
    \caption{(A) A silean coloring of a simplicial complex using $4$ colors, and one (B) using 3 colors. The silean number of this abstract simplicial complex is $3$.}
    \label{fig:ColoredComplex}
\end{figure}

Given an abstract simplicial complex $C$, it would be desirable for its silean number to be equal to the lean number of its associated hypergraph, and in turn, have that be equal to the bilean number of that hypergraph's incidence graph. We next give \Cref{thm:LSB}, which relates lean, silean, and bilean numbers.

\begin{theorem}\label{thm:LSB}
    Let $H = (V,E)$ be a reduced hypergraph and let $C = (V,E^+)$ and $G = (V,E,E')$ be its associated abstract simplicial complex and its associated bipartite incidence graph, respectively. Then $\Lean(H)=\Silean(C)=\Bilean(G)$.
\end{theorem}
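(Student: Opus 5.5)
The plan is to show that the three sets of colorings coincide as sets of functions $V\to\mathbb{Z}^+$: $\mathcal{L}(H)=\mathcal{S}(C)=\mathcal{B}(G)$. Since $\Lean$, $\Silean$ and $\Bilean$ are each the minimum of $|L|$ over the corresponding set, equality of the sets gives equality of the three numbers at once. All three notions already share the requirement of using at least two colors, and the underlying point set is $V$ in every case. So it suffices to show that the local constraints coincide.

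\textbf{Step 1 (lean vs.\ bilean).} This is essentially definitional. In $G=(V,E,E')$, the neighborhood $N_G(e)$ of a right vertex $e\in E$ is exactly the set of $v\in V$ with $v\in e$, i.e.\ $N_G(e)=e$. A function $L:V\to\mathbb{Z}^+$ is therefore a bilean coloring precisely when, for each $e\in E$, the vertices of $e$ show all distinct colors or a single color. That is the condition $|L(e)|\in\{1,|e|\}$, so $\mathcal{L}(H)=\mathcal{B}(G)$. Reducedness is not even needed here.

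\textbf{Step 2 (lean vs.\ silean).} The silean condition only constrains the maximal sets of $E^+$. The key claim is that, because $H$ is reduced, the maximal elements of $E^+$ under inclusion are exactly the edges of $E$, together with the singletons $\{v\}$ for isolated vertices $v$.

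To see this, first note that every element of $E^+$ is a subset of some edge of $E$ or is such a singleton. Hence any maximal element must be an edge of $E$ or an isolated singleton. Conversely, take $e\in E$ and suppose $e\subsetneq f$ with $f\in E^+$. Then $f\subseteq e'$ for some $e'\in E$ (an isolated singleton cannot properly contain a nonempty $e$). This gives $e\subsetneq e'$, contradicting reducedness. Finally, an isolated singleton lies in no edge, so it is maximal.

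Singletons are trivially monocolored, so they impose no constraint. The silean condition is therefore exactly the requirement that every $e\in E$ be monocolored or multicolored, and $\mathcal{S}(C)=\mathcal{L}(H)$. As an alternative, one could note that $(H^+)^-=H^-=H$ (as in the proof of \Cref{lem:!access}) and invoke \Cref{lem:HypAndItsRed}. However, the direct identification of maximal faces is cleaner and handles the isolated-singleton subtlety explicitly.

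The only real obstacle is this identification of the maximal faces of $C$ with $E$. It is where reducedness is used, and without it a non-maximal edge of $H$ would impose a constraint that $C$ does not see. Even then the colorings would agree by \Cref{lem:HypAndItsRed}, so I would remark that the argument is robust. Everything else is unwinding definitions.
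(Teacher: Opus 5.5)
Your proposal is correct and follows essentially the same route as the paper. Both prove the set equalities $\mathcal{L}(H)=\mathcal{B}(G)$ (definitional, since $N_G(e)=e$) and $\mathcal{S}(C)=\mathcal{L}(H)$ (because $H$ is, up to trivially satisfied singleton faces, the reduction of $C$); the paper gets the latter by citing \Cref{lem:HypAndItsRed}, whereas you identify the maximal faces of $C$ directly, which has the minor benefit of handling isolated vertices explicitly.
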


\begin{proof}
    In the case that one of $H$, $C$, or $G$ (and therefore all three) are disconnected, then $\Lean(H) = \Silean(C) = \Bilean(G) = 2$. 
    Note, the silean colorings of $C$, $\mathcal{S}(C)$, are exactly the lean colorings of $C$ when it is treated as a hypergraph. 
    When we treat $C$ as a hypergraph, then it is the case that $H$ is the reduction of $C$, so \Cref{lem:HypAndItsRed} implies that the $\mathcal{S}(C)$ is exactly $\mathcal{L}(H)$. It follows that $\Silean(C) = \Lean(H)$. Moreover, the set of lean colorings of $H$, $\mathcal{L}(H)$, is exactly the set of bilean colorings of $G$, i.e., $\mathcal{B}(G)$, and so $\Bilean(G) = \Lean(H)$.
\end{proof}

\begin{remark}
    In this section, we have established that computing lean numbers can be approached in three distinct ways (\Cref{thm:LSB}). See \Cref{fig:LSB} for an example of how a lean coloring can be translated into silean and bilean colorings. Based on those equivalent perspectives, this allows us to consider using other combinatorial tools to compute the lean number of a hypergraph. 
    We proceed to consider the problem for certain families of hypergraphs.
\end{remark}

\begin{figure}[h]
    \centering
    \begin{tabular}{rr}
    \begin{subfigure}[b]{0.3\textwidth}
        \centering
        \includegraphics[width=0.75\textwidth]{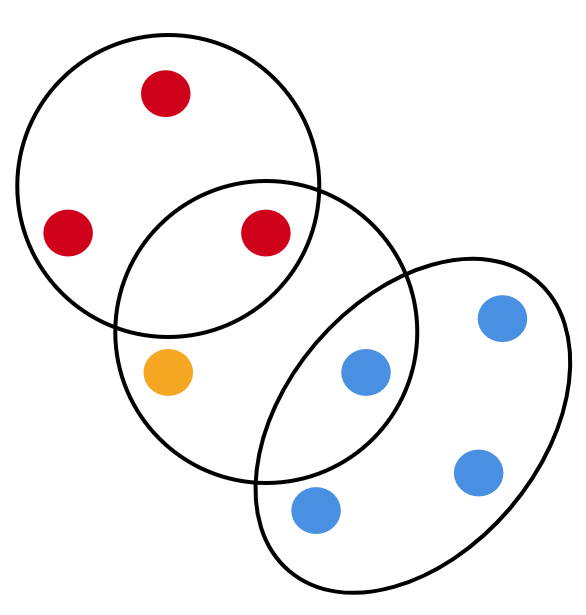}
        \caption{}
        \label{fig:LSB1}
    \end{subfigure}
    &
    \begin{subfigure}[b]{0.3\textwidth}
    \centering
\includegraphics[width=0.75\textwidth]{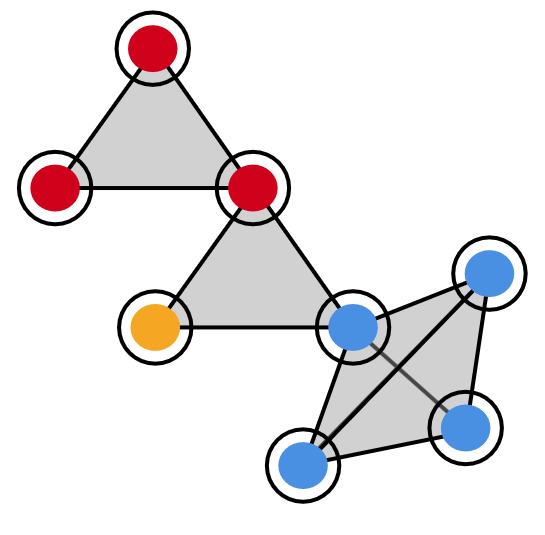}
        \caption{}
        \label{fig:LSB2}
    \end{subfigure}

    \begin{subfigure}[b]{0.3\textwidth}
    \centering
\includegraphics[width=0.75\textwidth]{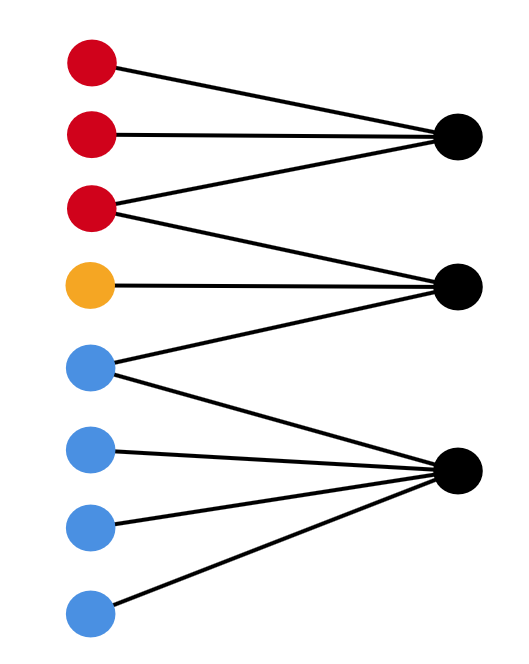}
        \caption{}
        \label{fig:LSB3}
    \end{subfigure}
    \end{tabular}
    \caption{(A) An optimal lean coloring of a CR hypergraph $H$. (B) An optimal silean coloring of the abstract simplicial complex corresponding to $H$. (C) An optimal bilean coloring of the incidence graph of $H$.}
    \label{fig:LSB}
\end{figure}

\section{Special Connected Hypergraphs}
\label{sec:special hypergraphs}
If a hypergraph is disconnected, we may simply color the vertices of one component one color and color the vertices in the remaining component(s) another color, quickly demonstrating a lean number of $2$. Connectedness of a hypergraph, then, is an essential assumption to ensure more interesting results. In this section, we assume connectedness in addition to other conditions on a hypergraph in order to place bounds on its lean number. \Cref{lem:EdgesAtLeastK} and \Cref{cor:unif} concern hypergraphs with edges that each contain at least a fixed number of vertices. \Cref{cor:kpar} concerns hypergraphs that are $k$-partite.

\begin{lemma}\label{lem:EdgesAtLeastK}
A connected hypergraph $H = (V,E)$ whose edges each have size at least $k$ satisfies 
\[\Lean(H)\geq k.\]
\end{lemma}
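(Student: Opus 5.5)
Let $L$ be any lean coloring of $H$. We show $|L| \geq k$. If $k\le 2$ there is nothing to prove, since every lean coloring uses at least two colors by definition; so assume $k \geq 3$. The key claim is that \emph{some edge of $H$ is multicolored under $L$}. Once we have such an edge $e$, it displays $|e|\ge k$ distinct colors, so $|L(V)| \geq |L(e)| = |e| \geq k$. Taking the minimum over all lean colorings then gives $\Lean(H)\geq k$.

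To prove the claim, we argue by contradiction using connectedness. Suppose every edge of $H$ is monocolored under $L$. Since $|L(V)|\geq 2$, choose vertices $u,w\in V$ with $L(u)\neq L(w)$. Because $H$ is connected, there is a path $u=v_1,e_1,v_2,\ldots,e_{n-1},v_n=w$ with $v_i,v_{i+1}\in e_i$. Each $e_i$ is monocolored, so $L(v_i)=L(v_{i+1})$ for every $i$. By induction along the path, $L(u)=L(w)$, which is a contradiction. Hence some edge is multicolored.

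The argument is short. The only step needing care is the contradiction via the path: it must use exactly the paper's notion of connectivity, namely paths through edges containing consecutive vertices, as in the proof of \Cref{lem:reduction}. We should also note that the assumption $|V|\geq 2$, together with connectivity, guarantees that edges exist, so the monocolored/multicolored dichotomy applies. I expect no real obstacle beyond making the case $k\le 2$ explicit.
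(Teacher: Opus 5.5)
Your proof is correct and follows essentially the same route as the paper. In both arguments, fewer than $k$ colors would force every edge to be monocolored, and connectivity then collapses everything to a single color, contradicting $|L(V)|\geq 2$. The only difference is that you propagate colors along a path between two differently colored vertices, whereas the paper partitions $E$ by color; your version is slightly cleaner, since it needs no separate check that the color-based edge partition really disconnects $H$.
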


\begin{proof}
For $k=1$ and $k=2$, the conclusion holds, as the lean number of any hypergraph is at least 2. 
Hence, we assume $k\geq 3$. If there exists a lean coloring $C$ of $H$ with $k-1$ or fewer colors, then under $C$, each edge in $E$ must be monocolored. 
We partition the edge set as follows. We fix a color, calling it color 1, and we let $E_1$ contain all edges in $E$ that, under the coloring $C$, have their vertices colored with color 1. Then, let $E_2=E\setminus E_1$, i.e., $E_2$ is the set of edges in $E$ that, under the coloring $C$, have their vertices not colored with color 1. 
The sets $E_1$ and $E_2$ form a partition of the edge set (and share no vertices). It follows that the subhypergraphs induced by the edge sets $E_1$ and $E_2$ are disconnected, violating our assumption of connectedness. Thus, we can conclude that any coloring of $H$ must use at least $k$ colors.
\end{proof}

The previous result applies also to $k$-uniform hypergraphs, whose edges each contain exactly $k$ vertices.

\begin{corollary}
\label{cor:unif}
A connected $k$-uniform hypergraph $H$ satisfies $\Lean(H) \geq k$.
\end{corollary}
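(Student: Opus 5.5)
This corollary is a direct specialization of \Cref{lem:EdgesAtLeastK}. A $k$-uniform hypergraph $H=(V,E)$ has $|e| = k$ for every $e \in E$, so in particular every edge has size at least $k$. Since $H$ is assumed connected, both hypotheses of \Cref{lem:EdgesAtLeastK} hold, and that lemma gives $\Lean(H)\geq k$ at once.

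To be safe I would also check that the proof of \Cref{lem:EdgesAtLeastK} covers the uniform case without hidden extra assumptions. The cases $k\in\{1,2\}$ are trivial because every lean coloring uses at least two colors by definition. For $k\geq 3$ the argument runs as follows.

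\begin{itemize}
\item Suppose a lean coloring $C$ uses at most $k-1$ colors.
\item Each edge has exactly $k$ vertices, so it cannot be multicolored; hence every edge is monocolored.
\item Because $|C(V)|\geq 2$, fix a color $1$. Split $E$ into $E_1$, the edges colored $1$, and $E_2$, the remaining edges.
\item Any vertex lying in an edge is colored consistently with that edge. So edges in $E_1$ and edges in $E_2$ share no vertex, and no path can cross between vertices of color $1$ and vertices of other colors.
\end{itemize}

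The one point needing care is showing that both color classes are nonempty as vertex sets and that every vertex lies in some edge. If a vertex were isolated, $H$ would already be disconnected, since $|V|\ge 2$. Given that, the split contradicts connectedness, which proves the bound.
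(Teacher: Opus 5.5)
Your proposal is correct and matches the paper, which obtains the corollary immediately from \Cref{lem:EdgesAtLeastK} because every edge of a $k$-uniform hypergraph has size exactly $k$. Your re-check of the lemma's argument is also sound. In particular, you note that both color classes are nonempty (using $|C(V)|\geq 2$) and that no vertex is isolated, which makes it slightly more explicit than the paper's own write-up of that lemma.
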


Let $k \geq 2$. A hypergraph $H$ is $k$-\textit{partite} if the vertices can be partitioned into $k$ sets, i.e., $V_1, V_2, \ldots, V_k$, and each edge contains exactly one vertex from each $V_i$ for $1 \leq i \leq k$. 
The following result concerns the lean number of such hypergraphs.

\begin{corollary}\label{cor:kpar}
If $k\geq 2$, then a connected $k$-partite hypergraph $H$ satisfies $\Lean(H) = k$. 
\end{corollary}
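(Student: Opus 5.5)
The plan is to prove two inequalities, $\Lean(H)\ge k$ and $\Lean(H)\le k$, using the partition structure for the upper bound.

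\emph{Lower bound.} In a $k$-partite hypergraph each edge meets each part $V_i$ in exactly one vertex, and the parts are disjoint. So every edge has exactly $k$ vertices, and $H$ is $k$-uniform. Since $H$ is connected, \Cref{cor:unif} (equivalently \Cref{lem:EdgesAtLeastK}) gives $\Lean(H)\ge k$.

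\emph{Upper bound.} We exhibit an explicit lean coloring with $k$ colors. Let $V_1,\dots,V_k$ be the partition of $V$ witnessing $k$-partiteness, and define $L:V\to\mathbb{Z}^+$ by $L(v)=i$ for $v\in V_i$. Any edge $e$ contains exactly one vertex from each $V_i$, so $L(e)=\{1,\dots,k\}$ and $|L(e)|=k=|e|$. Thus every edge is multicolored, and the edge condition $|L(e)|\in\{1,|e|\}$ holds.

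The only remaining point is the requirement $|L(V)|\ge 2$. Because $E\neq\emptyset$, some edge exists, and it meets every part. Hence every $V_i$ is nonempty, all $k\ge 2$ colors actually appear, and $|L(V)|=k\ge 2$. So $L$ is a lean coloring with $k$ colors, giving $\Lean(H)\le k$. Combining the two bounds yields $\Lean(H)=k$.

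I do not expect a real obstacle here. The only care needed is the degenerate issue just handled: all parts must be nonempty so that exactly $k$ colors are used, and $k\ge 2$ ensures the "at least two colors" condition. Both follow directly from $|E|\ge 1$ and the hypothesis on $k$.
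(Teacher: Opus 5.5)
Your proof is correct and follows the paper's argument. The part-indexed coloring $L(v)=i$ for $v\in V_i$ gives the upper bound, and $k$-uniformity with \Cref{cor:unif} gives the lower bound; your check that every $V_i$ is nonempty (so that $|L(V)|\ge 2$) is a detail the paper leaves implicit.
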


\begin{proof}
    Such a hypergraph has lean number at most $k$, since we may color each vertex in $V_i$ with color $i$, and then each edge is multicolored. A $k$-partite hypergraph is also $k$-uniform, so by \Cref{cor:unif}, it has a lean number at least $k$. 
\end{proof}

\begin{remark}
    \Cref{cor:kpar} is a tidy parallel to the fact that bipartite graphs are exactly the graphs with chromatic number $2$. It is \textit{not} the case that a hypergraph with lean number $k$ is necessarily $k$-partite, though, as \Cref{cor:rcomp} demonstrates below. 
\end{remark}

\section{General Results}\label{sec:GeneralResults}
We begin by introducing wide-path connected hypergraphs in \Cref{def:wide path}, and as a special case of these, we give the lean number of $r$-complete hypergraphs, see \Cref{cor:rcomp}.
We then consider hypergraphs in greater generality. In \Cref{lem:separation}, we introduce a way to bound the lean number of an arbitrary hypergraph in terms of the lean numbers of certain subhypergraphs. We then show in \Cref{thm:GoldenGoose} that the lean number of a hypergraph is preserved in some components after cleaving. Additionally, we show in \Cref{thm:DiamondGoose} that the order in which we perform cleaving operations on a hypergraph is inconsequential. 
We then relate the lean number for any hypergraph to the
chromatic number of its $2$-section, see \Cref{lem:2Section}.

To begin we introduce some needed concepts. For a hypergraph $H = (V,E)$, its \textit{$2$-section}, denoted $[H]_2$, is a graph $G= (V,E')$ such that if $v_1, v_2 \in V$ are adjacent in $H$, then $(v_1,v_2) \in E'$. 
We recall that a coloring of the vertices of $H$ requiring that adjacent vertices are assigned different colors is called a \textit{strong coloring} of $H$, for more on strong colorings we refer the reader to \cite{Hypergraphs}.
The fewest number of colors necessary to strong color a hypergraph $H$ is called its \textit{strong chromatic number}. Recall also that the chromatic number of a graph $G$, which we denote $\chi(G)$, is the smallest number of colors needed to color its vertices so that no two adjacent vertices share the same color. Finally, we define a \textit{wide-path connected} hypergraph.

\begin{definition}\label{def:wide path}
    Let $H= (V,E)$ be a hypergraph with $|V|\geq 2$.
    Then $H= (V,E)$ is \textit{wide-path connected} if for each pair of edges $e_1,e_n \in E$, there exists a sequence of edges $e_2,e_3,\ldots,e_{n-1} \in E$ such that for each $1 \leq i \leq n-1$, $|e_i \cap e_{i+1}| \geq 2$. The sequence $P: e_1,e_2,\ldots,e_n$ is referred to as a \textit{wide path between $e_1$ and $e_n$}.
\end{definition}

\Cref{def:wide path} implies each wide-path connected hypergraph is also connected. \Cref{fig:WidePath} shows a hypergraph that is wide-path connected and one that is not.

\begin{figure}[h!]
    \centering
    \begin{tabular}{rr}
    \begin{subfigure}[b]{0.5\textwidth}
        \includegraphics[width=0.75\textwidth]{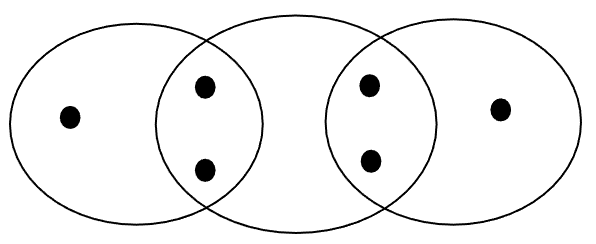}
        \caption{A wide-path connected hypergraph}
    \end{subfigure}
    &
    \begin{subfigure}[b]{0.5\textwidth}
\includegraphics[width=0.75\textwidth]{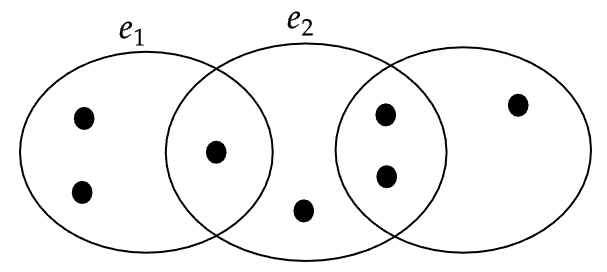}
        \caption{A hypergraph that is not wide-path connected.}
    \end{subfigure}
    \end{tabular}
    \caption{It is straightforward to show that subfigure (A) illustrates a wide-path connected hypergraph. Subfigure (B) illustrates a hypergraph that is not wide-path connected as $|e_1\cap e_2|=1$. This implies that there is not a wide path between distinct vertices in $e_1$ and $e_2$.}
    \label{fig:WidePath}
\end{figure}

\begin{lemma}\label{lem:WidePathConnected}
    Let $H= (V,E)$ be a wide-path connected hypergraph. Then $\Lean(H)= \chi([H]_2)$.
\end{lemma}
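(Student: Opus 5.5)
The plan is to show that, for a wide-path connected hypergraph, the lean colorings of $H$ are exactly the proper colorings of $[H]_2$ that use at least two colors. Then I will check that the minimum number of colors agrees on both sides.

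The key observation is a propagation property. Let $L$ be a lean coloring and let $e,f\in E$ with $|e\cap f|\geq 2$. Suppose $e$ is monocolored under $L$. Then two distinct vertices of $e\cap f$ receive the same color. Since $|f|\geq 2$, the edge $f$ is then not multicolored, so it must be monocolored, and its color is the color of $e$. Now take a wide path $e=e_1,e_2,\ldots,e_n=f$ between any two edges of $H$. Applying the observation along the path, a single monocolored edge forces every edge of $H$ to be monocolored, all with the same color.

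I will use the fact noted after \Cref{def:wide path} that $H$ is connected, together with $|V|\geq 2$. Connectedness means every vertex lies in some edge, and also that $E$ contains an edge of size at least $2$. If every edge were monocolored with one common color, then $L$ would be constant on $V$, contradicting $|L(V)|\geq 2$. Hence every edge is multicolored under $L$. That is, any two vertices adjacent in $H$ receive different colors, so $L$ is a proper coloring of $[H]_2$. This gives $\Lean(H)\geq\chi([H]_2)$.

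Conversely, take an optimal proper coloring $c$ of $[H]_2$. Every edge $e$ has pairwise adjacent vertices, so $c$ is injective on $e$, giving $|c(e)|=|e|$. Moreover, since some edge has two vertices, $\chi([H]_2)\geq 2$, so $c$ uses at least two colors. Thus $c$ is a lean coloring, and $\Lean(H)\leq\chi([H]_2)$.

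The main obstacle is the degenerate bookkeeping rather than the core argument. I need to make sure that connectedness of a wide-path connected hypergraph with $|V|\geq 2$ really excludes isolated vertices and guarantees an edge of size at least $2$. When $|E|\geq 2$ this follows from the wide-path condition itself, since the intersections already have size at least $2$. When $|E|=1$ it follows from connectedness with $|V|\geq 2$, since the single edge must then contain all of $V$. I would state this small case check explicitly before running the propagation argument.
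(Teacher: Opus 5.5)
Your proposal is correct and follows the paper's argument: you propagate monocoloredness along wide paths, use the at-least-two-colors rule to force every edge to be multicolored, and identify lean colorings with proper colorings of $[H]_2$. You check that last equivalence directly, where the paper cites Berge's result on strong colorings. Your bookkeeping is tighter than the paper's (all monocolored edges share one color, every vertex lies in an edge, and an optimal proper coloring uses at least two colors), but the vertex-coverage step inherits the paper's claim that wide-path connected implies connected. The definition alone does not give this when isolated vertices exist: for example, $V=\{1,2,3,4\}$, $E=\{\{1,2,3\}\}$ gives $\Lean(H)=2<3=\chi([H]_2)$, so connectedness is really an implicit hypothesis for both you and the paper.
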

\begin{proof}
    Let $C$ be any lean coloring of $H$. Let $e_1,e_n \in E$ be edges of $H$, and let $P: e_1,e_2\ldots,e_{n-1},e_n$ be a wide path between them. 
    Note that if $e_1$ is monocolored (respectively, multicolored) under $C$, then $e_2$ must be monocolored (resp., multicolored) under $C$, since $e_1$ and $e_2$ share at least two vertices. 
    We use induction to show that it must be true that every edge in the wide path $P$ is monocolored (resp., multicolored). 
    Assume edges $e_1$, $e_2$, $\ldots$,  $e_k$ in $P$ are all monocolored (resp., multicolored). 
    Suppose $e_k$ and $e_{k+1}$ are successive edges in the wide path $P$.
    Then, since $e_k$ and $e_{k+1}$ share at least two vertices, it must be the case that if $e_k$ is monocolored (resp., multicolored) under $C$, then $e_{k+1}$ is monocolored (resp., multicolored) under $C$. 

    Hence, if any edge in the wide path $P$ is monocolored under $C$, then so is every edge in the wide path $P$. 
    But, by assumption, we may connect any two vertices in $H$ with a wide path, so monocoloring any edge under $C$ would imply every edge is monocolored, and so there is only one color present in the coloring $C$. 
    This violates the rule that any lean coloring must use at least two colors, and so at least one edge must be multicolored under $C$. 
    By our reasoning above, every edge in $H$ must be multicolored under $C$. 

    Equivalently, any lean coloring $C$ requires that if two vertices are adjacent, then they must be assigned distinct colors. 
    It follows that every lean coloring of $H$ is a strong coloring of $H$. It is also true that every strong coloring of $H$ is also a lean coloring of $H$.
    Hence, in this case, the set of lean colorings of $H$ coincides with the set of strong colorings of $H$.
    As proven in \cite[p.\ 116]{Hypergraphs}, the strong chromatic number of $H$ is equal to the chromatic number of $[H]_2$. 
    Thus, we conclude that the lean number of $H$ is also the chromatic number of the $2$-section of $H$.
\end{proof}

In light of \Cref{lem:WidePathConnected}, next we consider hypergraphs with a ``relatively large" number of edges, which have a higher lean number. 
Consider a hypergraph $H=(V,E)$ and let $3 \leq r \leq |V|$. Then the hypergraph $H$ is $r$-\textit{complete}, if for each set of $r$ of its vertices, $v_1,v_2,\ldots,v_r \in V$, there exists an edge $e \in E$ that contains $v_1,v_2,\ldots,v_r$ (and possibly more). 

\begin{corollary}
\label{cor:rcomp}
An $r$-complete hypergraph $H = (V,E)$, with $3\leq r\leq |V|$, satisfies $\Lean(H)=|V|$. 
\end{corollary}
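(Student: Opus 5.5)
The plan is to reduce \Cref{cor:rcomp} to \Cref{lem:WidePathConnected}. We first show that an $r$-complete hypergraph with $r\geq 3$ is wide-path connected. We then observe that its $2$-section is the complete graph on $V$. Since $\chi(K_{|V|})=|V|$, this gives $\Lean(H)=|V|$.

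For the $2$-section, take any two distinct vertices $u,v\in V$. Because $r\leq |V|$, we can extend $\{u,v\}$ to an $r$-subset of $V$. By $r$-completeness, some edge contains that $r$-subset, so $u$ and $v$ are adjacent in $H$. Hence $[H]_2$ is the complete graph on $|V|$ vertices, and its chromatic number is $|V|$.

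For wide-path connectivity, fix two edges $e_1,e_n\in E$. If $|e_1\cap e_n|\geq 2$, they already form a wide path. Otherwise, the idea is to find an intermediate edge $f$ with $|f\cap e_1|\geq 2$ and $|f\cap e_n|\geq 2$, so that $e_1,f,e_n$ is a wide path. To build $f$, choose two vertices $a_1,a_2\in e_1$ and two vertices $b_1,b_2\in e_n$. The set $\{a_1,a_2,b_1,b_2\}$ may have only three elements if $e_1$ and $e_n$ share a vertex, but it has at most four. When $r\geq 4$, extend this set to an $r$-subset and take an edge containing it.

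The main obstacle is that this construction needs edges of size at least two and, in the above form, $r\geq 4$. The paper does not exclude small edges such as singletons, so these must be handled first.

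For small edges, apply \Cref{lem:HypAndItsRed} and replace $H$ by its reduction $H^-$. This does not change the lean number. It also does not change $r$-completeness, since each edge is contained in a maximal edge. In an $r$-complete hypergraph every vertex and every pair lies in an edge of size at least $r\geq 3$. So every maximal edge has size at least $r$, and $H^-$ has no small edges.

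For the case $r=3$, we avoid needing a single edge through four vertices. Pick $a_1,a_2\in e_1$ and $b_1,b_2\in e_n$, and take edges $f\supseteq\{a_1,a_2,b_1\}$ and $g\supseteq\{a_2,b_1,b_2\}$. Then $e_1,f,g,e_n$ is a wide path:
\begin{itemize}
\item $f\cap e_1\supseteq\{a_1,a_2\}$;
\item $f\cap g\supseteq\{a_2,b_1\}$;
\item $g\cap e_n\supseteq\{b_1,b_2\}$.
\end{itemize}
These pairs consist of distinct vertices as long as $a_2\neq b_1$, and we can always choose $a_2\neq b_1$ because the edges have size at least $3$.

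This chained argument works for every $r\geq 3$, so it can replace the single-edge construction entirely. Once $H^-$ is known to be wide-path connected, \Cref{lem:WidePathConnected} gives $\Lean(H)=\Lean(H^-)=\chi([H^-]_2)=|V|$.
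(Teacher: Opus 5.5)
Your proof is correct and follows the paper's route. Like the paper, you exhibit a wide path $e_1,f,g,e_n$ through two auxiliary edges supplied by $r$-completeness, invoke \Cref{lem:WidePathConnected}, and observe that $[H]_2$ is complete. The one substantive difference is that you first pass to $H^-$ via \Cref{lem:HypAndItsRed}, and this closes a real gap: the paper's construction (choosing $e'$ with $|e'\cap e_1|=r-1$) silently assumes edges are large, and an $r$-complete hypergraph with a singleton edge, such as $(\{1,2,3\},\{\{1,2,3\},\{1\}\})$, is not wide-path connected at all.
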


 \begin{proof}
    We begin by establishing that an $r$-complete hypergraph is wide-path connected. To see this, let $e_1,e_2 \in V$. Let $v,v' \in e_2$. Pick any edge $e' \in E$ that contains $v$ and satisfies $|e' \cap e_1 | = r-1 \geq 2$. Then pick any edge $e'' \in E$ that contains both $v$ and $v'$ and that satisfies $|e' \cap e''| = r-1 \geq 2$. Then $\{v,v'\} \subset e'' \cap e_2$ and so $e_1,e',e'',e_2$ is a wide path from $e_1$ to $e_2$.
    \Cref{lem:WidePathConnected} then applies, and so $\Lean(H) = \chi([H]_2)$. 
    Since each pair of vertices in $H$ are incident to some edge, $[H]_2$ is simply the complete graph on $|V|$ vertices, which has chromatic number $\chi([H]_2) = |V|$. 
\end{proof}

Before we present more results, we recall that for a hypergraph $H = (V,E)$, a set $A \subseteq V$ is an \textit{articulation set of $H$} \cite[Page~71]{HypTheory} if:
\begin{enumerate}
    \item there are two edges $e_1,e_2 \in E$ such that $A = e_1 \cap e_2$, and 
    \item the induced subhypergraph $H(V\setminus A)$ is disconnected and has a \textit{strictly greater} count of components than $H$ or is the empty hypergraph. 
\end{enumerate}
Condition 2 states that the subhypergraph $H(V\setminus A)$ removes the vertices in $A$ from $H$, but does not remove the edges incident to the vertices that are removed. 

For our purposes, we focus on articulation sets of hypergraphs that are singletons. We call such sets \textit{articulation vertices}. 
We introduce another operation on a hypergraph and a specified articulation vertex $v$ of that hypergraph, which we define as a \textit{cleave of $H$ at $v$}. 

\begin{definition}\label{def:cleave}
    Let $H=(V,E)$ be a hypergraph with $|E| = n$ and articulation vertex $v \in V$. The \textit{cleave~of~$H$~at~$v$}, denoted $\cleave(H,v)$, is the subhypergraph $H(V \setminus\{v\})$, with the following additional rule: for each $1 \leq i \leq n$, if $v \in e_i$, then we add into $e_i$ the new vertex $v_i$. 
\end{definition}

\Cref{fig:cleave} displays an example of a cleave operation on a hypergraph.

\begin{figure}[h!]
    \centering
    \begin{tabular}{rr}
    \begin{subfigure}[b]{0.3\textwidth}
        \includegraphics[width=\textwidth]{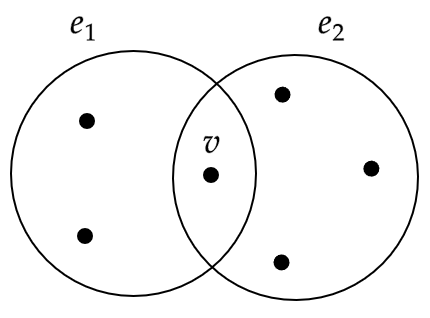}
        \caption{Hypergraph $H$}
    \end{subfigure}
    &\qquad\qquad
    \begin{subfigure}[b]{0.4\textwidth}
\includegraphics[width=\textwidth]{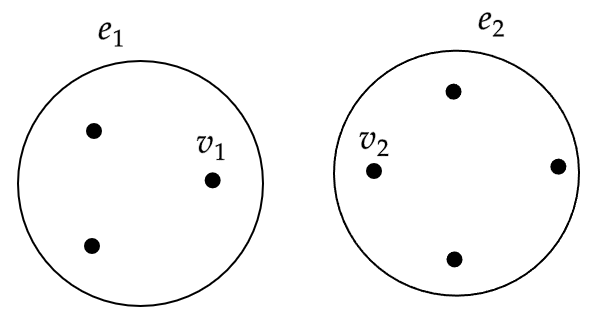}
        \caption{$\cleave(H,v)$}
    \end{subfigure}
    \end{tabular}
    \caption{Subfigure (A) is the initial hypergraph $H$ and we identify the articulation vertex $v$, which is incident to edges $e_1$ and $e_2$. 
    Subfigure (B) illustrates the cleave of $H$ at the articulation vertex $v$. Note that the vertex $v$ has been replaced with two vertices $v_1$ and $v_2$, with one appearing in each edge of $H$ where $v$ was.}
    \label{fig:cleave}
\end{figure}

As the next result shows, performing a cleave operation on a hypergraph, or \textit{cleaving} it, in this way establishes a relationship between the lean number of a hypergraph and the lean numbers of the \textit{components} of its cleave at an articulation vertex.

\begin{lemma}\label{lem:separation}
    Let $H = (V,E)$ be a CR hypergraph and let $v \in V$ be an articulation vertex of $H$. Also let $H_1,H_2,\ldots,H_n$ be the components of $\cleave(H,v)$. Then $\Lean(H) = \min_{1 \leq i \leq n}\Lean(H_i)$. 
\end{lemma}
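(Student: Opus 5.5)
The plan is to prove the two inequalities $\Lean(H)\geq \min_i \Lean(H_i)$ and $\Lean(H)\leq \min_i\Lean(H_i)$ separately. Each one transfers a coloring through the cleave.

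\emph{Restricting a coloring of $H$ to some component.} Let $L$ be a lean coloring of $H$ that uses exactly $\Lean(H)$ colors. Define a coloring $L'$ of $\cleave(H,v)$ by keeping $L$ on $V\setminus\{v\}$ and giving every copy $v_j$ the color $L(v)$. Each edge of $\cleave(H,v)$ then shows exactly the same multiset of colors as the corresponding edge of $H$, so it is still monocolored or multicolored. The only thing to check is that some component $H_i$ receives at least two colors under $L'$.
\begin{itemize}
\item Since $H$ is connected, every component $H_i$ must contain some copy $v_j$. Otherwise $H_i$ would already be a component of $H$ avoiding $v$.
\item Suppose every $H_i$ were monochromatic under $L'$. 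Then every $H_i$ would be entirely colored $L(v)$, and so $L$ would use only one color on $V$. This is a contradiction.
\end{itemize}
Hence some $H_i$ is lean colored by $L'|_{H_i}$ with at most $\Lean(H)$ colors. This gives $\min_i\Lean(H_i)\le\Lean(H)$.

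\emph{Extending a coloring of a component to $H$.} Fix $i$ attaining the minimum, and let $c$ be an optimal lean coloring of $H_i$. Define $L$ on $H$ as follows.
\begin{itemize}
\item Use $c$ on the vertices of $H_i$ that are not copies of $v$.
\item Give $v$ a suitable color $\gamma$.
\item Color every vertex lying in the other components with the single color $\gamma$.
\end{itemize}
Every edge of $H$ outside $H_i$ is then monocolored in color $\gamma$. Every edge of $H$ coming from $H_i$ is colored as in $c$, provided its copy of $v$ had color $\gamma$ under $c$. The coloring uses at least two colors because $c$ does.

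The main obstacle is this last proviso. A single component $H_i$ may contain several copies $v_j,v_k$ of $v$, namely when two edges through $v$ fall into the same component of $H(V\setminus\{v\})$. An optimal $c$ might give those copies different colors, and then they cannot all be identified back to $v$. My first attempt would be a normalization lemma: among the optimal lean colorings of $H_i$ there is one in which all copies of $v$ receive a common color. I would try to obtain it by permuting colors and recoloring each copy $v_j$, which lies in a single edge $e_j$ (each copy is a private vertex of its edge). A copy can be recolored freely as long as $e_j$ stays monocolored or multicolored; for instance, in a multicolored edge it can take any color not used elsewhere in $e_j$.

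If that normalization fails in general, I would instead choose the component $H_i$ in the first part more carefully, so that the restricted coloring is the one extended back. Alternatively, I would use CR-reducedness together with the articulation-vertex condition to show that each component of the cleave meets $v$ in a controlled way. I expect the remaining steps to be routine checks of the lean condition edge by edge.
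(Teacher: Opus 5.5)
Your argument for $\min_i \Lean(H_i)\le\Lean(H)$ is correct. The paper instead picks a multicolored edge of $H$ and restricts to the component containing its image; your route through ``every component contains a copy of $v$'' works equally well. The gap is in the reverse inequality, and you located exactly the right spot: a component of $\cleave(H,v)$ may contain several copies of $v$. The paper's proof passes over this silently. It speaks of ``the copy $v_1$ of $v$ in $H_1$'' and sets $\tilde L(v)=L'(v_1)$ without checking the other edges of $H_1$ that passed through $v$. None of your proposed repairs can close the gap, because the normalization lemma is false, and so is $\Lean(H)\le\min_i\Lean(H_i)$ itself. Being CR together with the articulation condition does not limit how many copies of $v$ a component receives.

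\emph{A counterexample.} Let $H$ have vertex set $\{v,a,b,c,p,q,r\}$, with edges all $3$-subsets of $\{v,a,b,c\}$ and all $3$-subsets of $\{v,p,q,r\}$.
\begin{enumerate}
\item $H$ is CR, and $v$ is an articulation vertex: $\{v,a,b\}\cap\{v,p,q\}=\{v\}$, and $H(V\setminus\{v\})$ has two components.
\item Each component of $\cleave(H,v)$ is isomorphic to the hypergraph with edges $\{v_1,a,b\},\{v_2,a,c\},\{v_3,b,c\},\{a,b,c\}$. It is lean colored by $a,b,c,v_1,v_2,v_3\mapsto 1,2,3,3,2,1$, so $\min_i\Lean(H_i)=3$ by Lemma~\ref{lem:EdgesAtLeastK}.
\item No normalization is possible. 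In every $3$-color lean coloring of this component, $\{a,b,c\}$ must be multicolored, since otherwise everything gets one color. That forces $v_1,v_2,v_3$ onto three different colors.
\item In $H$, the four edges of each block pairwise share two vertices, so each block is entirely monocolored or entirely multicolored. Both blocks monocolored gives a single color. A multicolored block needs four distinct colors, because every pair of its vertices lies in a common edge. Hence $\Lean(H)=4>3$.
\end{enumerate}

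What your two halves actually prove is $\Lean(H)=\min_i\Lean(\hat H_i)$, where $\hat H_i$ is obtained from $H_i$ by identifying all its copies of $v$ back into $v$. This agrees with the stated formula whenever each component contains exactly one copy of $v$. That is the situation the paper's proof and figures tacitly assume.
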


\begin{proof}
    First, we prove that $\Lean(H) \leq \min_{1 \leq i \leq n}\Lean(H_i)$ by considering lean colorings on the components of $\cleave(H,v)$. 
    
    First we show that $H$, being connected and reduced, cannot have edges containing a single vertex, i.e., singleton edges.
    If $H$ had a singleton edge $e$ and $e$ were contained in another edge distinct from $e$, then $H$ would not be reduced. If $e$ were not contained in any edge distinct from $e$, then $H$ would not be connected. 
    Hence, each edge of a connected and reduced hypergraph contains at least two vertices. 
    It follows that the vertex set of $H_i$ for each $i$ contains at least two vertices.
    
    Without loss of generality, we assume $\Lean(H_1) \leq \Lean(H_i)$ for $2 \leq i \leq n$. Let $H_1$ have vertex set $V_1$. 
    Note $\Lean(H_1)$ is well-defined since by our previous paragraph we know $H_1$ has at least two vertices.
    Let $L' \in \mathcal{L}(H_1)$ be an optimal lean coloring of $H_1$, and let $L$ be a lean coloring of $\cleave(H,v)$ defined as follows for the vertices present in $\cleave(H,v)$:

    \begin{align*}
        L(w)&= \begin{cases}
            L'(w) &\text{if $w \in V_1$}\\
            L'(v_1) &\text{otherwise}
        \end{cases}
    \end{align*}
where we recall that $v_1$ is the copy of the vertex $v$ in $H_1$.
   In words, we are simply lean coloring the vertices of $H_1$ optimally (keeping the coloring used in $L'$ on this component), then coloring all vertices in the other components with the color of the vertex $v_1$. 
   Since the components $H_2,H_3,\ldots,H_n$ contain only monocolored edges and because $L'$, the lean coloring of $H_1$, uses at least two colors, we have that the coloring $L$ of $\cleave(H,v)$ is a lean coloring. \Cref{fig:HypCleavedColored} shows this coloring $L$ on the example in \Cref{fig:cleave}.
    
    \begin{figure}[h]
        \centering
        \includegraphics[width=0.5\linewidth]{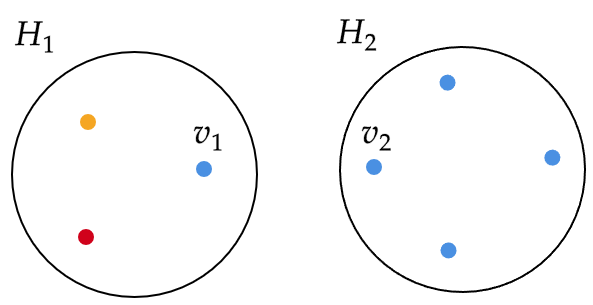}
        \caption{The hypergraph from subfigure (B) of \Cref{fig:cleave} has a coloring $L$ that displays an optimal lean coloring of the left component, denoted $H_1$. Since the cleaved vertex $v$ is colored blue (with its respective copy in each component labeled $v_1$ and $v_2$), each vertex contained in the component $H_2$ is colored blue.} 
        \label{fig:HypCleavedColored}
    \end{figure}
    
    We may use the lean coloring $L$ of $\cleave (H,v)$ to construct a lean coloring for $H$ by collapsing each $v_i$ onto $v$, and in doing so we recover the hypergraph $H$. 
    Hence, we construct $\tilde{L}$, a coloring of $H$, defined by:
    \begin{align*}
        \tilde{L}(w)&= \begin{cases}
            L'(w) &\text{if $w \in V_1\setminus\{v_1\}$}\\
            L'(v_1) &\text{if $w = v$ or $w \in V \setminus V_1$.}
        \end{cases}
    \end{align*}

    Specifically, $\tilde{L}$ colors each vertex that belonged to $H_1$ according to the coloring $L'$. We then color the cleaved vertex $v$ and each vertex not belonging to $H_1$ in the same color
    $L'(v_1)$. \Cref{fig:HypUncleavedColored} shows our example hypergraph when colored with $\tilde{L}$.
    
    \begin{figure}[h]
        \centering
        \includegraphics[width=0.4\linewidth]{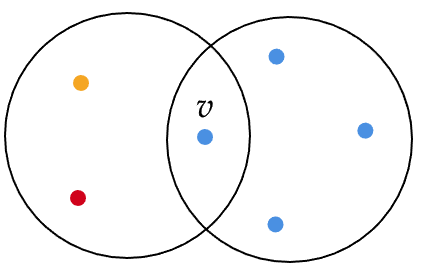}
        \caption{We use the colorings $L$ and $L'$ to lean color the vertices of $H$ via $\tilde{L}$.}
        \label{fig:HypUncleavedColored}
    \end{figure}
    
    We have now produced a lean coloring of $H$ using $\Lean(H_1)$ colors, and so $\Lean(H) \leq \Lean(H_1)$. \\
    
    We now prove that $\Lean(H) \geq \min_{1 \leq i \leq n}\Lean(H_i)$. Let $v$ be an articulation vertex of $H$ and let $C$ be an optimal lean coloring of $H$.
    Since $H$ is connected, there must be at least one edge $e \in E$ that is multicolored under $C$ (for if there were not, $H$ would be disconnected or each vertex would be colored with only one color). 
    Then we define the following lean coloring $C'$ of the vertices of $\cleave(H,v)$:

    \begin{align*}
        C'(w) &= \begin{cases}
            C(w) &\text{if $w \in V\setminus\{v\}$}\\
            C(v) &\text{if $w = v_i$ for some $1 \leq i \leq n$.}
        \end{cases}
    \end{align*}

    We wish to prove now that the edge $e$, multicolored under $C$, has a corresponding partner in $\cleave(H,v)$ that is multicolored under $C'$. If the edge $e$ did not contain the articulation vertex $v$, then $e$ is also multicolored under $C'$. 
    Since $H$ is connected and reduced, every edge contains at least 2 vertices.
    Hence,
    if the edge $e$ \textit{did} contain the cleaved vertex $v$, then its corresponding edge $e'=e\setminus\{v\} \cup \{v_1\}$ in $E'$ (the edge set of $\cleave(H,v)$)  contains at least two vertices and the edge $e'$ is also multicolored under~$C'$. Without loss of generality, suppose $e'$ belongs to the edge set of the component $H_1 = (V_1,E_1)$ in $\cleave(H,v)$. We have now created a lean coloring of $H_1$, i.e., $C'|_{V_1}$, that uses at most $\Lean(H)$ colors. Hence, $\Lean(H) \geq \Lean(H_1) \geq \min_{1 \leq i \leq n}\Lean(H_i).$
    \end{proof}

\begin{remark}
    \Cref{lem:WidePathConnected} states that a wide-path connected hypergraph $H$ has lean number $\Lean(H) =~\chi([H]_2)$. It may be of interest to the reader to know that the converse is false, see hypergraph $H$ in \Cref{fig:RemarkAboutWPCAandArtVerts}. Additionally, a wide-path connected hypergraph has no articulation vertices. The converse of this is also false, see hypergraph $P$ in \Cref{fig:RemarkAboutWPCAandArtVerts}. There is no relationship between a hypergraph $E$ having no articulation vertices and it satisfying $\Lean(H) = \chi([H]_2)$. For visual examples, see \Cref{fig:RemarkAboutWPCAandArtVerts}. 
\end{remark}

\begin{figure}[h!]
    \centering
    \begin{tabular}{rr}
    \begin{subfigure}[b]{0.3\textwidth}
        \includegraphics[width=.7\textwidth]{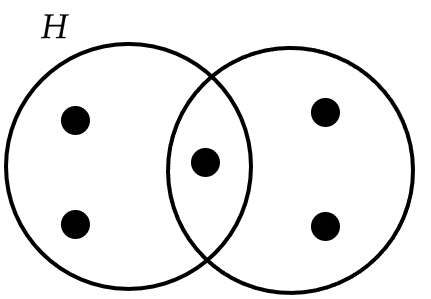}
    \end{subfigure}
    \begin{subfigure}[b]{0.25\textwidth}
\includegraphics[width=.7\textwidth]{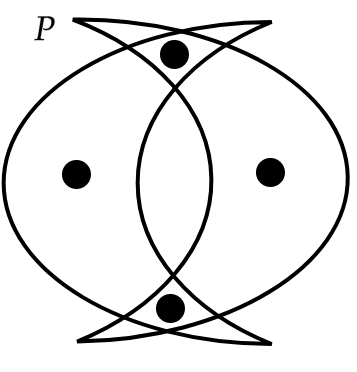}
    \end{subfigure}
    \begin{subfigure}[b]{0.25\textwidth}
        \includegraphics[width=.7\textwidth]{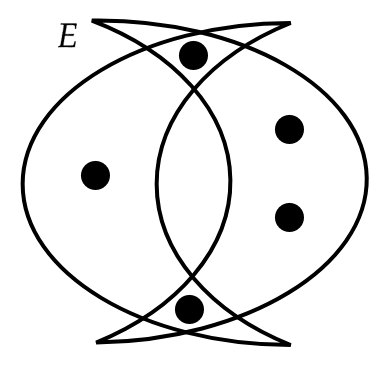}
    \end{subfigure}
    \end{tabular}
    \caption{Hypergraph $H$ satisfies $\Lean(H) = \chi([H]_2)$ = 3, is not wide-path connected, and has an articulation vertex. Hypergraph $P$ has no articulation vertices, is not wide-path connected, and does satisfy $\Lean(P) = \chi([P]_2)=3$. Hypergraph $E$ has no articulation vertices, it not wide-path connected, and it does not satisfy $\Lean(H) = \chi([H]_2)$ (since $\Lean(E) = 3$ and $\chi([E]_2) = 4$).}
    \label{fig:RemarkAboutWPCAandArtVerts}
\end{figure}

Instead of requiring larger combinatorial techniques, we can transform our problem into a series of much smaller lean coloring problems via \Cref{lem:separation}. It is rarely the case that an arbitrary hypergraph has exactly one articulation vertex. Iterated use of \Cref{lem:separation} proves valuable, though we may worry that choosing to cleave a hypergraph at its articulation vertices in different orders may introduce ambiguity. In fact, we need not worry about this, as \Cref{thm:GoldenGoose} will demonstrate. We require \Cref{lem:TheHammer} before we establish \Cref{thm:GoldenGoose}.

\begin{lemma}\label{lem:TheHammer}
    Let $H = (V,E)$ be a reduced hypergraph with articulation vertex $v \in V$. Then each component of $\cleave(H,v)$ is reduced.
\end{lemma}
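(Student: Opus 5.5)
The plan is to argue by contradiction, working edge by edge through the explicit description of $\cleave(H,v)$ in \Cref{def:cleave}. Write $E=\{e_1,\ldots,e_n\}$. The edges of $\cleave(H,v)$ are in bijection with $E$: the image of $e_i$ is
\[ e_i' = \begin{cases} e_i & \text{if } v\notin e_i,\\ (e_i\setminus\{v\})\cup\{v_i\} & \text{if } v\in e_i.\end{cases} \]
The new vertices $v_1,\ldots,v_n$ are pairwise distinct and each lies in exactly one edge, namely $v_i\in e_i'$. Every component $H_j$ of $\cleave(H,v)$ has as its edge set a subset of $\{e_1',\ldots,e_n'\}$. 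It therefore suffices to show that $\cleave(H,v)$ itself is reduced: containment between two distinct edges of a component is, in particular, containment between two distinct edges of $\cleave(H,v)$.

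Suppose, toward a contradiction, that $e_i'\subseteq e_k'$ for some $i\neq k$. I would split into cases according to whether $v\in e_i$.

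\emph{Case $v\in e_i$.} Then $v_i\in e_i'\subseteq e_k'$. However, $v_i$ belongs only to $e_i'$, so this is impossible.

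\emph{Case $v\notin e_i$.} Then $e_i'=e_i\subseteq V\setminus\{v\}$. Every vertex of $e_k'$ that lies in $V$ is a vertex of $e_k$, since $e_k'\cap V=e_k\setminus\{v\}$. Hence $e_i\subseteq e_k\setminus\{v\}\subseteq e_k$ with $e_i\neq e_k$ as edges of $H$. This contradicts the assumption that $H$ is reduced.

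Both cases are contradictory, so no edge of $\cleave(H,v)$ is contained in another, and the same holds in each component.

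The main subtlety I expect is bookkeeping rather than mathematics. One must make sure that distinct edges of $H$ remain distinct after cleaving, so that two edges of $H$ do not collapse to the same edge of $\cleave(H,v)$. This follows from the case analysis: if two images coincided, then each would contain the other, and the argument above excludes that. One must also read \Cref{def:cleave} so that each $v_i$ is a genuinely new vertex, attached to exactly one edge. I would state both points explicitly at the start of the proof.
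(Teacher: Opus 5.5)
Your proof is correct and takes essentially the same route as the paper's: it argues by contradiction and splits on whether the smaller edge carries a copy of $v$. If it does, that copy lies in only one edge, so it cannot sit in the larger edge. If it does not, the containment pulls back to a containment between two distinct edges of $H$. Your two cases merge the paper's Cases 1 and 3, and because you argue on all of $\cleave(H,v)$ rather than on one component, you avoid the paper's tacit assumption that a component contains a single copy $v_k$ of $v$. That assumption fails when several edges through $v$ remain connected after the cleave.
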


\begin{proof}
    Suppose for the sake of contradiction that $K$ is a component of $\cleave(H,v) = ((V\setminus\{v\}) \cup \{v_1,\ldots, v_n\},F)$ that is not reduced, i.e., there exist distinct edges $f_1,f_2 \in F$ such that $f_1 \subset f_2$. 
    We have the following cases:
    \begin{enumerate}
        \item The copy of $v$ in $K$, call it $v_k$, is not in $f_2$, and hence not in $f_1$.
        \item The vertex $v_k$ is in $f_1$ and hence it is in $f_2$.
        \item The vertex $v_k$ is in $f_2$, but it is not in $f_1$.
    \end{enumerate}

    For Case 1:
    If $v_k \notin f_2$, then $f_1$ and $f_2$ are edges in $E$, and so $f_1$ cannot be a subset of $f_2$, as $H$ is reduced, reaching a contradiction. 

    For Case 2:
    This case is impossible, since by definition of $\cleave(H,v)$, the vertex $v_k$ is contained in exactly one edge, but we assumed $v_k \in f_1$ and $f_1\subset f_2$, hence $v_k \in f_2$, a contradiction. 

    For case 3:
    Finally, consider $v_k \notin f_1$ but $v_k \in f_2$. 
    Then $f_1$ is an edge in $E$, and in $H$ there exists an edge $e\coloneqq (f_2 \setminus \{v_k\}) \cup \{v\}\in E$, (i.e., the original version of the edge $f_2$ prior to the cleave) which contains $f_1 \in e$. This contradicts the assumption that $H$ is reduced.
\end{proof}

\Cref{lem:TheHammer} implies that each time we cleave a reduced hypergraph at an articulation vertex, each component is a CR hypergraph. In order to state \Cref{thm:GoldenGoose} more plainly, we introduce the following notation.

\begin{definition}
    Let $H = (V,E)$ be a reduced hypergraph with distinct articulation vertices $v_1,v_2,\ldots,v_n \in~V$. Let $H_1 \coloneqq \cleave(H,v_1)$. Then let $H_2$ be the hypergraph obtained by performing a cleave at $v_2$ on the (CR) component of $H_1$ containing $v_2$, and denote the resulting hypergraph $H_2 \coloneqq \cleave(H,(v_1,v_2))$. 
    Continue in this way until we have performed a cleave at $v_n$ on the (CR) component of $H_{n-1}$ containing~$v_n$. The resulting hypergraph is denoted $\cleave(H,(v_1,v_2,\ldots,v_n))$.
\end{definition}

\begin{theorem}\label{thm:GoldenGoose}
    Let $H = (V,E)$ be a CR hypergraph with distinct articulation vertices $v_1,v_2,\ldots,v_n \in V$. Let the components of $\cleave(H,(v_1,v_2,\ldots,v_n))$ be $K_1,K_2,\ldots,K_m$ for some $m \in \mathbb{N}$. Then $\Lean(H) = \min_{1 \leq i \leq m} \Lean(K_i)$. 
\end{theorem}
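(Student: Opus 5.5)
I will argue by induction on the number $n$ of articulation vertices cleaved, with \Cref{lem:separation} doing the real work at each step. For $n=1$ the statement is exactly \Cref{lem:separation}, since the components $K_1,\ldots,K_m$ of $\cleave(H,(v_1))=\cleave(H,v_1)$ are the $H_i$ there. To run the induction I will prove a slightly stronger statement: for any finite disjoint union $\mathcal{H}$ of CR hypergraphs, define $\mu(\mathcal{H})$ as the minimum of $\Lean$ over its components. Then cleaving one component $K$ of $\mathcal{H}$ at an articulation vertex $w$ of $K$ does not change $\mu$.

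The single-step claim is immediate from \Cref{lem:separation}. Cleaving $K$ at $w$ replaces the component $K$ by the components $K^{(1)},\ldots,K^{(r)}$ of $\cleave(K,w)$, and leaves all other components untouched. \Cref{lem:separation} says $\Lean(K)=\min_j \Lean(K^{(j)})$. So the minimum over all components is unchanged, since a minimum over a set is unaffected when one element is replaced by a set of values whose minimum equals it. For \Cref{lem:separation} to apply, $K$ must be CR. It is connected because it is a component, and it is reduced by \Cref{lem:TheHammer} applied to the previous stage. So every hypergraph produced along the way is a disjoint union of CR hypergraphs, each with at least two vertices (no singleton edges, as argued in the proof of \Cref{lem:separation}), so every $\Lean$ involved is well defined.

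Chaining the steps gives $\Lean(H)=\mu(H_1)=\mu(H_2)=\cdots=\mu(H_n)=\min_i\Lean(K_i)$.

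The main obstacle is making sure each step is legitimate. Specifically, $v_j$ must still be an articulation vertex of the component of $H_{j-1}$ containing it, and it must lie in exactly one component; the definition of iterated cleave presupposes both. I would verify them as follows.

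\emph{$v_j$ lies in one component.} The earlier cleaves only split the vertices $v_1,\ldots,v_{j-1}$ into copies. Since $v_j\neq v_i$ for $i<j$, the vertex $v_j$ survives uncopied and lies in exactly one component $K$.

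\emph{$v_j$ is still an articulation vertex of $K$.} I would check both conditions of the definition.
\begin{enumerate}
\item \emph{Intersection condition.} In $H$ there are edges $e,e'$ with $e\cap e'=\{v_j\}$. Their images in the cleaved hypergraph still both contain $v_j$ and lie in $K$. Their intersection is still $\{v_j\}$, because the fresh copies of $v_i$ are distinct per edge.
\item \emph{Disconnection condition.} Removing $v_j$ from $K$ must increase the number of components. Take the two sides of $v_j$ in $H$. Any path in $K\setminus\{v_j\}$ between them would map, by collapsing copies back to the $v_i$, to a path in $H\setminus\{v_j\}$, which is impossible.
\end{enumerate}
Care is needed in the case where one side of $v_j$ was already separated from it by an earlier cleave. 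I expect this to be the most delicate point, and I would handle it by choosing $e,e'$ to be two edges through $v_j$. Both still contain $v_j$, hence both lie in $K$, and they lie on different sides of $v_j$. This yields the disconnection inside $K$ directly.
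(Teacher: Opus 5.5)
\textbf{The statement is false, so neither your proof nor the paper's can be completed.} Your induction has the same skeleton as the paper's proof: one application of \Cref{lem:separation} per cleave. Your invariance of $\mu$ is a cleaner version of the paper's case split on whether $v_n$ lies in a component realizing the minimum. Your check that $v_j$ remains an articulation vertex of its component of $H_{j-1}$ is something the paper assumes silently, and it is essentially right. (Two edges through $v_j$ on different sides exist because $H$ is connected, so every component of $H(V\setminus\{v_j\})$ contains the remainder of some edge through $v_j$.) However, the delicate point is not the one you flagged. Every step relies on \Cref{lem:separation}, and that lemma is false as stated. The case $n=1$ of the theorem is exactly that lemma.

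The failure is in the inequality $\Lean(H)\le\min_i\Lean(H_i)$, whose proof refers to ``the'' copy $v_1$ of $v$ in $H_1$. The cleave creates one copy of $v$ per edge through $v$, so one component can contain several copies. An optimal lean coloring of that component may give those copies different colors, and then they cannot be collapsed back to $v$.

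Here is a concrete counterexample. Let $V=\{v,c_1,c_2,c_3,d_1,d_2,d_3\}$ with edges $\{v,c_1,c_2\}$, $\{v,c_1,c_3\}$, $\{v,c_2,c_3\}$, $\{c_1,c_2,c_3\}$, $\{v,d_1,d_2,d_3\}$.
\begin{enumerate}
\item $H$ is CR, and $v$ is an articulation vertex (in fact the only one): $\{v,c_1,c_2\}\cap\{v,d_1,d_2,d_3\}=\{v\}$, and $H(V\setminus\{v\})$ has the two components $\{c_1,c_2,c_3\}$ and $\{d_1,d_2,d_3\}$.
\item In $\cleave(H,v)$, the component containing the $c_i$ has lean number $3$. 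Color $c_i$ with $i$, and give the copies of $v$ in the images of $\{v,c_1,c_2\}$, $\{v,c_1,c_3\}$, $\{v,c_2,c_3\}$ the colors $3,2,1$, respectively.
\item The other component is a single $4$-edge, with lean number $4$. So the minimum is $3$.
\item Yet $\Lean(H)=4$. With at most three colors the $4$-edge must be monocolored. If $\{c_1,c_2,c_3\}$ is monocolored, each $\{v,c_i,c_j\}$ contains two equal colors and is forced monocolored, so only one color appears. If it is multicolored, each $\{v,c_i,c_j\}$ is forced multicolored, so $v$ must differ from all three $c_i$. Four colors do suffice: $c_i\mapsto i$, everything else $\mapsto 4$.
\end{enumerate}

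To get a true statement you must change the construction. One option is to redefine the cleave so it adds one copy of $v$ per component of $H(V\setminus\{v\})$ rather than one per edge. Another is to assume that distinct edges through each cleaved vertex have their remainders in distinct components. Either way, each component contains exactly one copy of the cleaved vertex, so the collapsing step in \Cref{lem:separation} becomes valid. Reducedness still propagates, since collapsing copies preserves inclusions between edges. Your $\mu$-invariance induction and your persistence check then go through essentially unchanged.
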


\begin{proof}
    We proceed by induction to show that the components of $H_j\coloneqq\cleave(H,(v_1,v_2,\ldots,v_j))$ have a similar property for each $1 \leq j \leq n$. When $j=1$, the previous statement is simply \Cref{lem:separation}. 
    Assume that the statement is true for each $1 \leq j \leq n-1$. In other words, if $H_j^1, H_j^2, \ldots, H_j^{l_j}$ are the components of $H_j$, then $\Lean(H) = \min_{1\leq x\leq l_j} \Lean(H_j^{x})$. 
    This assumption implies that $1.$ there exists a component of $H_{n-1}$, say $H_{n-1}'$, with $\Lean(H) = \Lean(H_{n-1}')$, and $2.$ each other component of $H_{n-1}$ has lean number at least $\Lean(H)$. 
    If vertex $v_n$ is a member of $H_{n-1}'$, then \Cref{lem:separation} implies that there is a component of $\cleave(H,(v_1,v_2,\ldots,v_n))$ with lean number equal to $\Lean(H)$, and all other components have lean number at least $\Lean(H)$. 
    If $v_n$ is not a member of $H_{n-1}'$, then performing a cleave at $v_n$ preserves $H_{n-1}'$, thereby exhibiting a component of $H_n \coloneqq \cleave(H,(v_1,v_2,\ldots,v_n))$ with lean number equal to $\Lean(H)$.
    Additionally, if $K$ is the component of $H_{n-1}$ containing $v_n$, then after cleaving at $v_n$, each component of $\cleave(K,v_n)$ has lean number at least $\Lean(K) \geq \Lean(H)$. 
\end{proof}

\Cref{thm:GoldenGoose} implies that the lean number of a hypergraph is preserved in at least one component, independent of the sequence in which we cleave the articulation vertices. This becomes very useful in Algorithm~\ref{alg:CleaveAlg} in \Cref{sec:Algo}. 

Next, we consider the effect of the  cleave operation on the hypergraph's incidence matrix.
Recall that for a given hypergraph $H = (V,E)$, its \textit{(vertex-edge) incidence matrix} is a binary matrix of dimension $|V| \times |E|$, where the entry in the $m$th row and $n$th column is a $1$ if vertex $m$ is incident to edge $n$, and $0$ otherwise \cite{Parui_2025}. Each unlabeled hypergraph corresponds to a set of incidence matrices, with each matrix being identical up to a permutation of the rows and a permutation of the columns. Likewise, each set of binary matrices that are identical up to a permutation of rows and a permutation of columns corresponds to a unique unlabeled hypergraph. Next, we provide an equivalent definition of a cleave using incidence matrices.

\begin{definition}
    Let $H=(V,E)$ be a hypergraph with vertex-edge incidence matrix $M$ and articulation vertex $v \in V$. Suppose $v$ is incident to the edges $e_1,e_2,\ldots,e_k$. The \textit{cleave of $H$ at $v$}, denoted $\cleave(H,v)$, is the hypergraph corresponding to the binary matrix obtained in the following way:
    \begin{enumerate}
        \item Delete the row of $M$ corresponding to the vertex $v$.
        \item For each $1 \leq i \leq k$, insert into the matrix a row that has a $1$ in the $i$th column and $0$s elsewhere.  
    \end{enumerate}
\end{definition}

See \Cref{fig:cleaveincidence} for examples of how cleaving a hypergraph can be viewed equivalently as an operation on its incidence matrix.

\begin{figure}
    \centering
    \begin{tabular}{rr}
    \begin{subfigure}[b]{0.8\textwidth}
        \includegraphics[width=.9\textwidth]{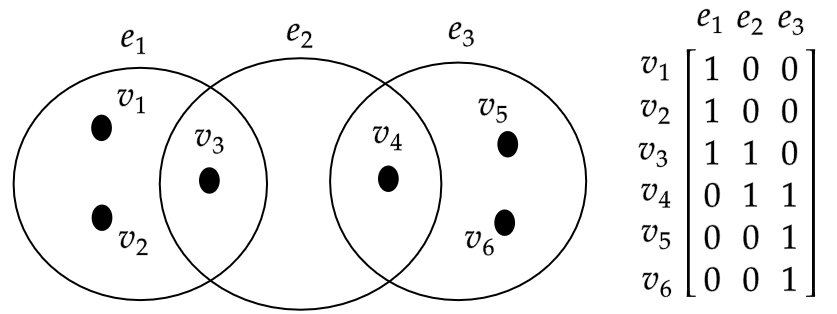}
        \caption{A hypergraph $H$ with articulation vertices $v_3$ and $v_4$. Its incidence matrix is pictured to the right.}
    \end{subfigure}
    \\
    \begin{subfigure}[b]{0.8\textwidth}
\includegraphics[width=\textwidth]{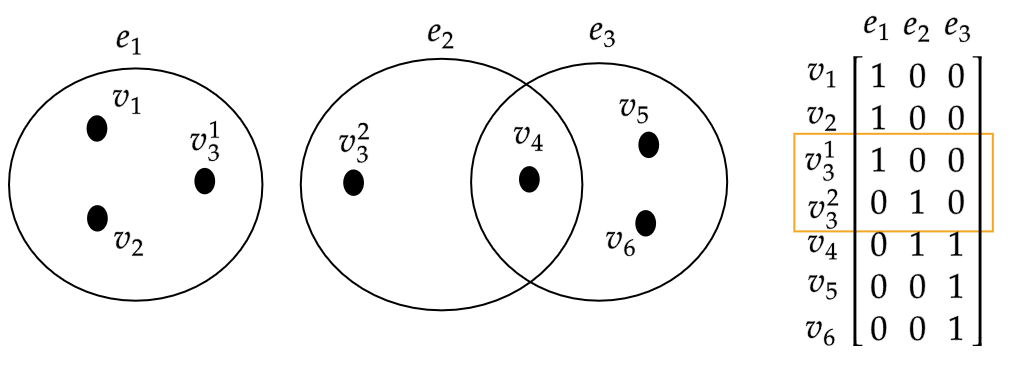}
        \caption{The hypergraph $\cleave(H,v_3)$ and its incidence matrix. In the orange box, we see that the row previously corresponding to vertex $v_3$ has been replaced by two rows, labeled $v_3^1$ and $v_3^2$. These two rows indicate which edges $v_3$ belongs to in $H$.}
    \end{subfigure}
    \\
    \begin{subfigure}[b]{0.8\textwidth}
\includegraphics[width=\textwidth]{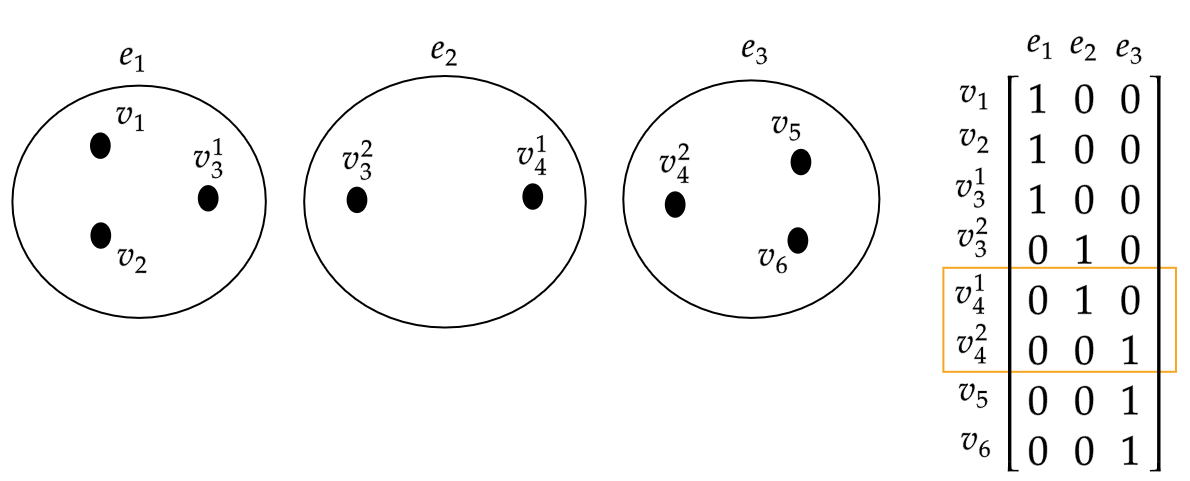}
        \caption{The hypergraph $\cleave(H,(v_3,v_4))$ and its incidence matrix. In the orange box, we see that the row in the matrix in (B) previously corresponding to vertex $v_4$ has been replaced by two rows, labeled $v_4^1$ and $v_4^2$. These two rows indicate which edges $v_4$ belongs to in $H$.}
    \end{subfigure}
    \end{tabular}
    \caption{}
    \label{fig:cleaveincidence}
\end{figure}

Two unlabeled hypergraphs $H_1$ and $H_2$ are isomorphic exactly when the incidence matrix $M_2$ of $H_2$ can be obtained by a sequence of row swaps and column swaps of the incidence matrix $M_1$ of $H_1$. Using this idea, we establish that the order in which we perform cleaves on a given hypergraph is, in a sense, inconsequential. 
As an example, \Cref{fig:cleaveincidence} illustrates that we arrive at the hypergraph illustrated in subfigure (C) regardless of whether we cleave at $v_3$, then $v_4$, or at $v_4$, then $v_3$. 

\begin{theorem}\label{thm:DiamondGoose}
    Let $H = (V,E)$ be a CR hypergraph and $v,w \in V$ be articulation vertices. Then $\cleave(H,(v,w))$ is isomorphic to $\cleave(H,(w,v))$. 
\end{theorem}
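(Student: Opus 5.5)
The plan is to work with the incidence-matrix description of the cleave and show that both orders of cleaving produce, up to row and column permutations, the same binary matrix. Let $M$ be the $|V|\times|E|$ incidence matrix of $H$. Let $v$ lie in the edges $e_{i_1},\ldots,e_{i_k}$ and $w$ lie in the edges $e_{j_1},\ldots,e_{j_l}$. The column set never changes under a cleave: the edges of $\cleave(H,v)$ are in canonical bijection with $E$, since each edge $e\ni v$ becomes $(e\setminus\{v\})\cup\{v_e\}$. So every matrix below can be indexed by the same columns $E$.

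First I check that $w$ is still an articulation vertex of the component of $\cleave(H,v)$ containing it, so that $\cleave(H,(v,w))$ is defined. Symmetrically, $v$ must remain one after cleaving at $w$. I expect this to be the main obstacle, because the definition of articulation set involves $A=e_1\cap e_2$ and a strict increase in the number of components of the induced subhypergraph.

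The plan for this step is as follows.
\begin{enumerate}
\item Since $w\neq v$, any two edges $e_1,e_2$ with $e_1\cap e_2=\{w\}$ in $H$ keep intersection exactly $\{w\}$ after replacing $v$ by private copies. The copies are new distinct vertices, so they do not enlarge intersections.
\item For the disconnection condition, note that cleaving at $v$ only removes adjacencies passing through $v$. Hence removing $w$ from the relevant component of $\cleave(H,v)$ separates at least as much as removing $w$ from $H$.
\item The edges on the two sides of $w$ in $H(V\setminus\{w\})$ stay in different components after cleaving at $v$. They may in fact be split further.
\end{enumerate}
If this turns out to be delicate, I would note that the construction in the definition is well defined as a matrix operation regardless. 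I would then argue only about the resulting matrices, which is what the statement really concerns.

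The main computation is routine. Cleaving at $v$ deletes row $v$ and appends the unit rows $\mathbf{u}_{i_1},\ldots,\mathbf{u}_{i_k}$, where $\mathbf{u}_i$ denotes the row with a single $1$ in column $e_i$. Row $w$ is untouched by this step, because only row $v$ is altered. So cleaving the result at $w$ deletes row $w$ and appends $\mathbf{u}_{j_1},\ldots,\mathbf{u}_{j_l}$.

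The outcome is therefore the matrix whose rows are $\{M_x : x\in V\setminus\{v,w\}\}$ together with the unit rows $\mathbf{u}_{i_1},\ldots,\mathbf{u}_{i_k},\mathbf{u}_{j_1},\ldots,\mathbf{u}_{j_l}$. The cleave at $w$ is performed on the component containing $w$, but the other components are carried along unchanged, so the whole matrix is as described.

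The reverse order gives exactly the same multiset of rows over the same columns. The reason is that cleaving at $w$ does not modify row $v$, so the edges containing $v$ are still $e_{i_1},\ldots,e_{i_k}$. The two matrices therefore differ only by a permutation of rows. By the stated correspondence between unlabeled hypergraphs and matrices up to row and column permutations, $\cleave(H,(v,w))\cong\cleave(H,(w,v))$.
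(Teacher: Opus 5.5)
Your proposal is correct and is essentially the paper's proof. Both arguments index the incidence matrices by the unchanged column set $E$ and observe that either order of cleaving yields the rows of $V\setminus\{v,w\}$, plus $\deg(v)$ unit rows for the edges through $v$ and $\deg(w)$ unit rows for the edges through $w$, so the two matrices agree up to a row permutation.

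Your additional check that $w$ remains an articulation vertex of its component after cleaving at $v$ is something the paper leaves implicit, and your step (3) is the right idea. It works because each component of $H(V\setminus\{w\})$ contains $e\setminus\{w\}$ for some edge $e\ni w$, all such edges lie in the component of $\cleave(H,v)$ containing $w$, and any path avoiding $w$ in the cleave projects to a path avoiding $w$ in $H$.
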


\begin{proof}
    Let $M_{v,w}$ and $M_{w,v}$ be the incidence matrices of $\cleave(H,(v,w))$ and $\cleave(H,(w,v))$, respectively. We proceed to show that $M_{v,w}$ and $M_{w,v}$ are identical after a permutation of the rows. Both matrices have $|E|$ columns and $|V|$ $+$ $\deg(v)$ $+$ $\deg(w)$ $ - $ $2$ rows. Both matrices have exactly the same rows:

    \begin{itemize}
        \item $|V| - 2$ rows describing the incidences of vertices in $V \setminus \{v,w\}$,
        \item $\deg(v)$ rows, each containing exactly one copy of $1$, and all other entries are $0$. Each edge $e\in E$ that contains $v$ in $H$ is represented by exactly one row containing a $1$ in the column representing $e$. 
        \item $\deg(w)$ rows, each containing exactly one copy of $1$, and all other entries are $0$. Each edge $e\in E$ that contains $w$ in $H$ is represented by exactly one row containing a $1$ in the column representing $e$. 
    \end{itemize}

    These sets of rows are disjoint and together comprise the rows of both $M_{v,w}$ and $M_{w,v}$. Since these two matrices contain the same rows, but perhaps in a different order, they describe isomorphic hypergraphs. 
\end{proof}

\Cref{thm:DiamondGoose} implies that regardless of the order in which we perform the cleaves of articulation vertices of a hypergraph, we always end with the same hypergraph, up to isomorphism. To finish this section, we relate graph coloring techniques to the lean number problem. 

\begin{lemma}\label{lem:2Section}
    For a hypergraph $H = (V,E)$, $\Lean(H) \leq \chi([H]_2)$. 
\end{lemma}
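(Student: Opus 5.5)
The plan is to show that every proper coloring of the $2$-section $[H]_2$ is a lean coloring of $H$. Taking an optimal proper coloring then gives $\Lean(H)\le\chi([H]_2)$.

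Fix a proper coloring $c:V\to\mathbb{Z}^+$ of $[H]_2$ using $\chi([H]_2)$ colors. For each edge $e\in E$, any two distinct vertices of $e$ are adjacent in $H$, so they are joined by an edge of $[H]_2$ and receive distinct colors under $c$. Hence $|c(e)|=|e|$, so every edge is multicolored and the edge condition $|c(e)|\in\{1,|e|\}$ holds. Singleton edges are automatically fine, since then $|e|=1$.

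The only remaining requirement is that $c$ use at least two colors. If $[H]_2$ has at least one edge, then $\chi([H]_2)\ge 2$ and we are done. The delicate case, which I expect to be the main obstacle, is when $[H]_2$ has no edges, i.e.\ every edge of $H$ is a singleton. Then $\chi([H]_2)=1$, while $\Lean(H)\ge 2$ by definition, so the inequality as literally stated would fail. I would handle this by reading $\chi([H]_2)$ there as at least the minimum $2$ forced by the lean-coloring convention, or by noting that the paper's standing interest is in connected hypergraphs with $|V|\ge 2$. In a connected hypergraph with $|V|\geq 2$, some edge must contain two vertices, so $[H]_2$ has an edge and the issue disappears.

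Away from that degenerate case, $c$ is a lean coloring with $\chi([H]_2)$ colors, which gives $\Lean(H)\le\chi([H]_2)$. Equivalently, one could cite the fact recalled in the proof of \Cref{lem:WidePathConnected} that strong colorings of $H$ are exactly proper colorings of $[H]_2$. Every strong coloring makes all edges multicolored, hence is a lean coloring whenever it uses at least two colors.
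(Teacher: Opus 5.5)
Your argument matches the paper's, which observes that every strong coloring of $H$ (equivalently, every proper coloring of $[H]_2$) is a lean coloring, so $\Lean(H)$ is at most the strong chromatic number $\chi([H]_2)$. Your caveat is correct and is something the paper's one-line proof misses: if every edge of $H$ is a singleton (e.g.\ $V=\{1,2\}$, $E=\{\{1\}\}$), then $\chi([H]_2)=1<2=\Lean(H)$, so the lemma needs a hypothesis such as connectedness, which forces $[H]_2$ to have an edge, or else the bound should read $\max\{2,\chi([H]_2)\}$.
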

\begin{proof}
    Note that any strong coloring of $H$ is also a lean coloring of $H$, and so the lean number of $H$ is at most the strong chromatic number of $H$, which is equal to $\chi([H]_2)$.
\end{proof}

Since there are many simple bounds on chromatic numbers of graphs, \Cref{lem:2Section} is a powerful tool used to establish a bound for the lean number of an arbitrary hypergraph. For example, we have the following corollary.

\begin{corollary}
    Let $H= (V,E)$ be a hypergraph and let $\Delta$ be the greatest vertex degree of a vertex in~$V$. Then $\Lean(H) \leq \Delta + 1$. 
\end{corollary}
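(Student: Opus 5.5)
The plan is to obtain the bound as a direct consequence of \Cref{lem:2Section}, followed by the elementary greedy bound for graph colorings. By \Cref{lem:2Section}, $\Lean(H) \le \chi([H]_2)$, so it suffices to show $\chi([H]_2) \le \Delta + 1$.

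\textbf{Step 1: greedy coloring of the $2$-section.} Order the vertices of $V$ arbitrarily as $u_1, u_2, \ldots, u_{|V|}$. Color them one at a time, giving $u_j$ the least positive integer not already used on a neighbor $u_i$ of $u_j$ in $[H]_2$ with $i<j$. At the moment $u_j$ is colored, at most $\deg_{[H]_2}(u_j)$ colors are forbidden. Hence at most $\max_u \deg_{[H]_2}(u)+1$ colors are ever used, and the result is a proper coloring of $[H]_2$. Equivalently, it is a strong coloring of $H$, and therefore a lean coloring of $H$ provided it uses at least two colors. This last condition holds as soon as $H$ has some edge of size at least $2$, since the endpoints of that edge are adjacent in $[H]_2$.

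\textbf{Step 2: comparing degrees.} The remaining task is to bound $\max_u \deg_{[H]_2}(u)$ by the quantity $\Delta$ of the statement. When $\Delta$ is read as the largest number of vertices adjacent to a given vertex, which is its degree in $[H]_2$, Step~1 closes the argument immediately. In particular this covers every ordinary graph, where neighbor count and edge count coincide. So I would first write the proof in that form and remark that it specializes to the greedy bound $\chi(G)\le \Delta(G)+1$.

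\textbf{The main obstacle.} The hard step is Step~2 under the other reading, where $\Delta$ counts the edges containing a vertex. In that case $\deg_{[H]_2}(u) \le \sum_{e\ni u}(|e|-1)$, which can be much larger than the number of edges at $u$, so the greedy argument through $[H]_2$ does not deliver $\Delta+1$.

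I would first try to bypass $[H]_2$ and build the lean coloring directly, choosing for each edge whether it is monocolored or multicolored. That attempt is where I expect the statement itself to fail. Take a single edge containing all $n\ge 3$ vertices. Then $\Delta = 1$, but a lean coloring must multicolor that edge, because monocoloring it would use only one color. So $\Lean(H) = n > \Delta + 1$.

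Consequently, I would present the proof via Steps~1--2 with $\Delta$ understood as the maximum degree of $[H]_2$, and note explicitly that the edge-count reading requires additional hypotheses.
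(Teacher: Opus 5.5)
Your route is the paper's: \Cref{lem:2Section} plus a greedy coloring of $[H]_2$. The paper handles your Step~2 with the single assertion that ``since $\Delta$ is the greatest degree of a vertex in $V$, it is also the greatest degree of a vertex in $[H]_2$.'' Your objection to that step is right. The paper itself uses $\deg(v)$ for the number of edges containing $v$; that is what gives the row count $|V|+\deg(v)+\deg(w)-2$ in the proof of \Cref{thm:DiamondGoose}. With that meaning the identification fails and the corollary is false as stated. Your single edge on $n\ge 3$ vertices is a legitimate counterexample: it is connected and reduced, and it is even $n$-complete, so \Cref{cor:rcomp} already gives $\Lean(H)=n>2=\Delta+1$. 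What the paper's argument actually establishes is the neighbor-count version you propose to present. If you want a true statement with the edge-count $\Delta$, your inequality $\deg_{[H]_2}(u)\le\sum_{e\ni u}(|e|-1)$ yields $\Lean(H)\le \Delta(k-1)+1$ for $k=\max_{e\in E}|e|\ge 2$. This reduces to $\Delta+1$ for graphs and is attained by your counterexample.

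One loose end remains. In Step~1 you note that the greedy coloring is a lean coloring only when it uses two colors. In Step~2, however, you conclude that Step~1 ``closes the argument immediately'' under the neighbor-count reading without treating the case where that condition fails, and there the statement genuinely fails. If every edge of $H$ is a singleton, then $[H]_2$ is edgeless, so $\Delta=0$ while $\Lean(H)=2$. The corrected statement therefore needs the hypothesis that some edge has at least two vertices (equivalently $\Delta\ge 1$), or else the conclusion $\Lean(H)\le\max\{2,\Delta+1\}$. The same degenerate case is also an exception to \Cref{lem:2Section} as stated, since there $\chi([H]_2)=1$.
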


\begin{proof}
    Since $\Delta$ is the greatest degree of a vertex in $V$, it is also the greatest degree of a vertex in $[H]_2$. A greedy coloring of $[H]_2$ shows that $\chi([H]_2) \leq \Delta + 1$, and so $\Lean(H) \leq \chi([H]_2) \leq \Delta + 1$.
\end{proof}

In the next section, we use the results established here to detail an algorithm to give further bounds on the lean number of an arbitrary hypergraph.

\section{Algorithm}\label{sec:Algo}
This section includes a general algorithm to produce an upper bound of the lean number of an arbitrary hypergraph. 
Python-like code is provided alongside the description of each step in the algorithm. Let our hypergraph of interest be $H = (V,E)$. In order to speak of the complexity of this algorithm, let $|V| = n$ and $|E| = m$. For instructive purposes, we use the hypergraph in \Cref{fig:coloringexample} to illustrate the workings of each step in the algorithm.

\begin{figure}[h!]
\centering
\begin{minipage}{.5\textwidth}
  \centering
  \includegraphics[width=.45\linewidth]{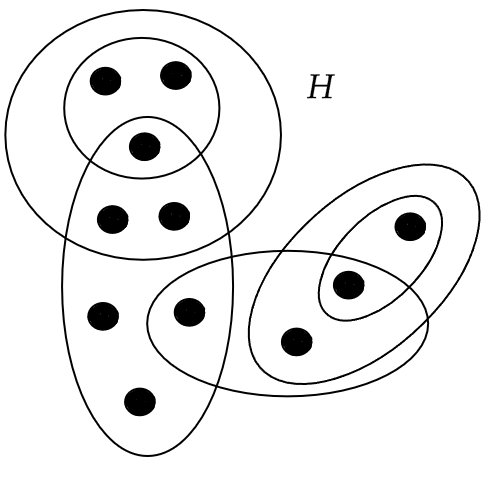}
  \captionof{figure}{A running example of a hypergraph $H$ that we lean color.}
    \label{fig:coloringexample}
\end{minipage}%
\begin{minipage}{.5\textwidth}
  \centering
  \includegraphics[width=0.45\linewidth]{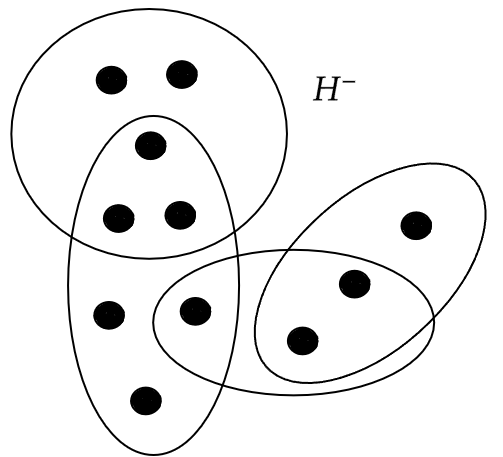}
    \captionof{figure}{Applying Algorithm~\ref{alg:RedAlg} to the hypergraph $H$ in \Cref{fig:coloringexample}, returns the illustrated reduced hypergraph $H^-$.
    }
    \label{fig:coloringexamplereduced}
\end{minipage}
\end{figure}

\subsection{Processing: Connectedness and Reduction}
\label{sec:proc}
As mentioned in \Cref{sec:special hypergraphs}, our problem is trivial when a graph is disconnected. 
If $H$ has two or more components, the lean number of $H$ is $2$ and the algorithm halts.
If not, we proceed to reduce $H$ in order to decrease the number of edges to consider. 
Recall, a hypergraph and its reduction have the same lean number, as proven in \Cref{lem:HypAndItsRed}. 
This step can be performed in $O(n+m)$ steps via a depth-first search. 

The reduction step can be performed in $O(m^2)$ steps at worst, as we must check every pair of edges. 
Below is our implementation of a reduction algorithm, inspired by the one present in \cite[Page~4]{HypTheory}.

\begin{algorithm}
    \SetKwInOut{Input}{Input}
    \SetKwInOut{Output}{Output}

    \underline{function ReductionAlgorithm}$(H)$\;
    \Input{Hypergraph $H=(V,E)$. In the Python package HypergraphX, $E$ is by default a set of tuples. For our purposes, we consider them as sets.}
    R = \{\}\;
    \For{$(i, s) \in$ \text{enumerate}($E$)}{ 
        \If{not \text{any}($s$ $\subset$ \text{other} for $(j,\text{other}) \in$ \text{enumerate}($E$) if $i$ $\neq$ $j$)}{Append $E[i]$ to $R$
        }
    }
    return $H^- = (V,R)$
    \caption{Reduction Algorithm}\label{alg:RedAlg}    
\end{algorithm}

Algorithm~\ref{alg:RedAlg} iterates over each edge and keeps it in the result set $R$ if it is not a subset of any other edge. What is returned is the hypergraph with only the kept edges, which is the reduction of $H$. The reduction of our example hypergraph is shown in \Cref{fig:coloringexamplereduced}. Now that we have performed our processing, we may proceed to simplify the problem further by cleaving the hypergraph along its articulation vertices, which we do next. 

\subsection{Locating and Cleaving Along Articulation Vertices}
In order to convert the problem of lean coloring the hypergraph $H$ into smaller lean coloring problems, we employ \Cref{lem:separation}. \Cref{fig:CleaveAlgo} shows the components we produce when we perform a cleave at the articulation vertex $v$.

\begin{figure}[h!]
    \centering
    \begin{tabular}{rr}
    \begin{subfigure}[b]{0.4\textwidth}
    \centering
        \includegraphics[width=.5\textwidth]{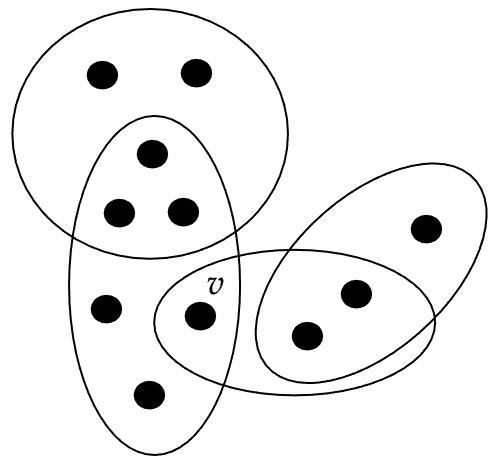}
        \caption{Hypergraph $H^-$ and vertex $v$.}\label{H and v}
    \end{subfigure}
    &\qquad\qquad
    \begin{subfigure}[b]{0.4\textwidth}
    \centering
\includegraphics[width=.65\textwidth]{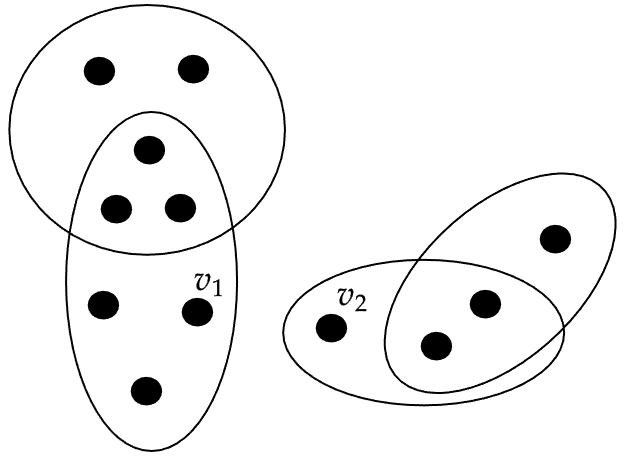}
        \caption{$\cleave(H^-,v)$}
    \end{subfigure}
    \end{tabular}
    \caption{Subfigure (A) is the initial hypergraph $H$ and we identify the articulation vertex $v$. 
    Subfigure (B) illustrates the cleave of $H$ at the articulation vertex $v$. Each edge that contains $v$ in $H$ now contains a copy of $v$, with labels $v_1$ and $v_2$.}
    \label{fig:CleaveAlgo}
\end{figure}

In order to locate the articulation vertices of a given hypergraph, we use Algorithm~\ref{alg:AVAlg}.

\begin{algorithm}[h!]
\SetKwInOut{Input}{Input}
\SetKwInOut{Output}{Output}

\underline{function FindArtVerts}$(H)$\;

\Input{A hypergraph $H=(V,E)$.}

ArtVerts = \{\}\;
\For{$v \in V$}{
    \If{\text{Hypergraph}$(V \setminus \{v\})$ is not connected}{
    Append $v$ to ArtVerts}    
    }
return ArtVerts

\caption{Articulation Vertex Location Algorithm}
\label{alg:AVAlg}

\end{algorithm}

Testing whether a hypergraph with a particular vertex removed is connected can be efficiently performed with the HypergraphX Python package. We use Algorithm~\ref{alg:AVAlg} in conjunction with Algorithm~\ref{alg:CleaveAlg}, which performs a cleave at each articulation vertex.

\begin{algorithm}
\SetKwInOut{Input}{Input}
\SetKwInOut{Output}{Output}

\underline{function CleavingAlgorithm}$(H,S)$\;

\Input{A hypergraph $H=(V,E)$. Set $S \coloneqq$ FindArtVerts($H$).}

$\text{LabelCounter} = \max(V) + 1$\;

F = E

\For{$v \in S$}{
    \For{$(i,\text{edge}) \in \text{enumerate}(F)$}{
        \If{$v \in \text{edge}$}{
            $edge[v] \leftarrow \text{LabelCounter}$\;
            $\text{LabelCounter} \leftarrow \text{LabelCounter} + 1$\;
        }
    }
}
Return $(\bigcup F,F)$

\caption{Cleaving Algorithm}
\label{alg:CleaveAlg}

\end{algorithm}

Algorithm~\ref{alg:CleaveAlg} first finds the smallest positive integer label not present on any vertex. We clone the edge set $E$ and call it $F$ so that we can produce the final edge set without changing $E$. 
It then enumerates each edge in $F$. If an articulation vertex $v$ is present in an edge, then it gets replaced with the next available label, i.e., the next smallest positive integer not present on any vertex. 
Hence, a cleave is performed at each occurrence of the articulation vertex $v$. 
This process is repeated for each articulation vertex, until a cleave is performed at each one. 
This produces an updated set of edges, which describes a hypergraph that is the output of Algorithm~\ref{alg:CleaveAlg}.

\subsection{Two-Sections}
Thanks to \Cref{lem:separation}, instead of lean coloring $H$ (or $H^-$), we may consider the lean colorings of the components of $\cleave(H^-,v)$, i.e., $H_1$ and $H_2$. 
Finding the component(s) with the smallest lean number or smallest bound on its lean number, we then color the vertices in the rest of the components a single color. Finding such a component can be done efficiently for this hypergraph, but not in general. Instead, an arbitrary hypergraph's lean number can be bounded by finding bounds on the lean numbers of components after performing any number of cleaves. 
A simple bound for a component's lean number is found with its $2$-section via \Cref{lem:2Section}. 
Locating a hypergraph's $2$-section can be performed in $O(nm)$ steps at worst. 
\Cref{fig:two section} displays the $2$-section of $\cleave(H^-,v)$. 

\begin{figure}[h!]
    \centering
    \begin{tabular}{rr}
    \begin{subfigure}[b]{0.3\textwidth}
        \includegraphics[width=\textwidth]{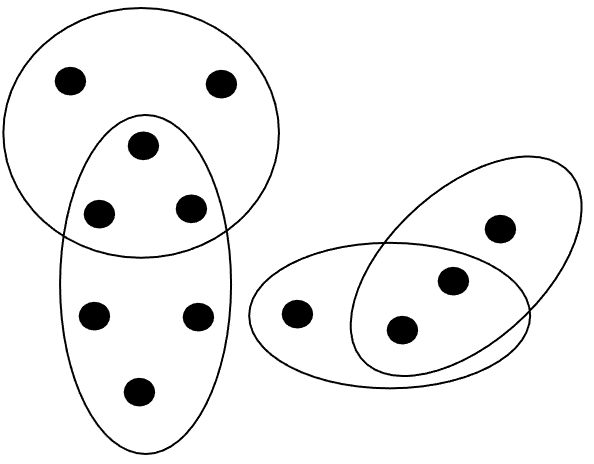}
        \caption{}
    \end{subfigure}
    &\qquad\qquad
    \begin{subfigure}[b]{0.3\textwidth}
\includegraphics[width=\textwidth]{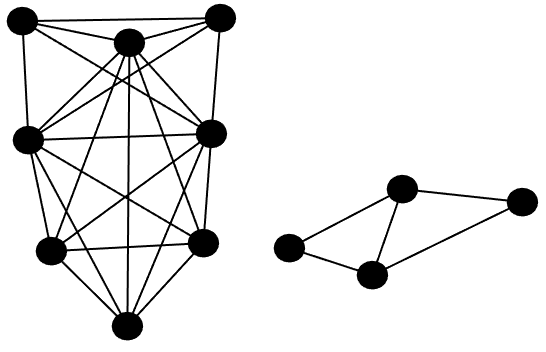}
        \caption{}
    \end{subfigure}
    \end{tabular}
    \caption{Subfigure (A) is $\cleave(H^-,v)$.
    Subfigure (B) is the $2$-section of $\cleave(H^-,v)$.}
    \label{fig:two section}
\end{figure}

Below is our implementation of a command that produces the two-section of a hypergraph by utilizing its adjacency matrix. Recall that the adjacency matrix $A$ of a hypergraph $H=(V,E)$ has dimension $|V| \times |V|$ and has entries $A_{ij}$, where $A_{ij}$ is the number of edges that contain both vertices $i$ and $j$. 
The matrix $A$ can be efficiently produced with the HypergraphX Python package. 
\begin{algorithm}
\SetKwInOut{Input}{Input}
\SetKwInOut{Output}{Output}

\underline{function TwoSectionAlgorithm}$(A)$\;

\Input{A hypergraph's adjacency matrix $A$ that has $n$ rows and $n$ columns.}
\For{$i \in \{1, 2, \ldots,n\}$}{
    \For{$j \in \{i,i+1,i+2,\ldots,n\}$}{
        \If{$A_{ij} >0$}{
        Append $(i,j)$ to TwoSection
        }
    }
}
return \text{Graph(TwoSection)}

\caption{$2$-Section Algorithm}
\label{alg:TSAlg}

\end{algorithm}

Algorithm~\ref{alg:TSAlg} takes in a hypergraph's adjacency matrix. 
Since the adjacency matrix of any hypergraph is symmetric, we only need to parse the upper half of the adjacency matrix, i.e., the entries $(i,j)$ with $i\leq j$.
If there is a positive entry in position $(i,j)$ of the adjacency matrix (i.e., vertex $i$ is adjacent to vertex $j$), then the output graph, \textit{TwoSection}, receives an edge between vertices $i$ and $j$. With these algorithms implemented, we are ready to color the hypergraph. 

\subsection{It is Always Graph Coloring Deep Down}
In light of \Cref{thm:GoldenGoose}, once we have performed a cleave at each articulation vertex of a hypergraph (in any order), we are left with a hypergraph that is expressible as a union of components. Finding a bound for the lean number of each component via \Cref{lem:2Section} yields a bound for the lean number for the original hypergraph. \Cref{fig:colored example final} displays a chromatic coloring of the two components in our example from \Cref{fig:two section}. Since the fewest number of colors used in either component is $3$, we conclude that an upper bound for the lean number of our original hypergraph $H$ is $3$. In fact, by inspection, this is optimal.

\begin{figure}[h!]
    \centering
    \begin{tabular}{rr}
    \begin{subfigure}[b]{0.4\textwidth}
        \includegraphics[width=\textwidth]{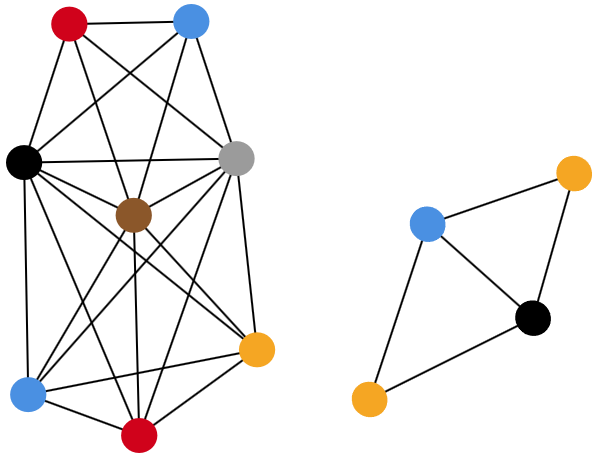}
        \caption{}
    \end{subfigure}
    &\qquad\qquad
    \begin{subfigure}[b]{0.3\textwidth}
\includegraphics[width=\textwidth]{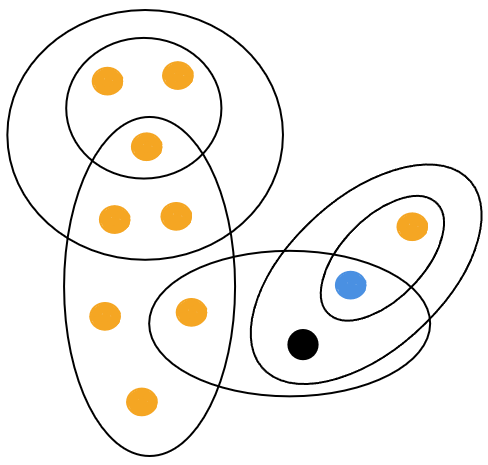}
        \caption{}
    \end{subfigure}
    \end{tabular}
    \caption{Subfigure (A) features a chromatic coloring of each component. Since the component with fewer vertices uses fewer colors (i.e., $3$), that is our upper bound for the lean number of our original hypergraph.
    Subfigure (B) is a lean coloring of our hypergraph $H$ using only three colors, which is optimal by inspection.}
    \label{fig:colored example final}
\end{figure}

Calculating the chromatic number for all but the simplest graphs is not possible in polynomial time. Certain techniques like dynamic programming can decide $k$-colorability in $O( (1+ \sqrt[3]{3})^n)$ time \cite{chromaticcomplexity}. Indeed, the problem in general is NP-Hard, making the overall problem of lean coloring at least NP-Hard in general. 
Consequently, the full algorithm proposed here has a complexity of $O(n + m + m^2 + 2nm + n^n) = O(n^n)$. 
For our purposes, we utilized the GCol library \cite{GCol}, which implements coloring algorithms present in 
\cite{GraphCol}. 

\section{Future Work}

\subsection{More Special Classes of Hypergraphs}
We conclude with some direction for future work. 
\begin{question}\Cref{cor:kpar} gives a family of hypergraphs for which the bound in \Cref{lem:EdgesAtLeastK} is tight. Hence we ask, can we characterize the collection of hypergraphs for which $\Lean(H)=k$? 
\end{question}

\begin{question}
    In \Cref{sec:LNOSKAL}, we find the lean numbers of several knots and links. We notice that among knots, a lean number of $5$ is rare, but among links, a lean number of $5$ appears frequently. Are there infinitely many knots or links with a lean number of $5$? Are there knots or links with a lean number of $6$, or knots or links with an arbitrarily large lean number?
\end{question}

\subsection{Graph Operations}
Given two graphs $G$ and $H$, we recall that Vizing's conjecture \cite{Vizing} states that the domination number of the Cartesian product of 
$G\square H$ is greater than or equal to the product of the domination number of the two graphs. Motivated by this famous conjecture, we ask:

\begin{question}
    Can we define an operation $\times$ that can produce a hypergraph from two given hypergraphs $H_1$ and $H_2$, and simply relates the lean number of $H_1 \times H_2$ to the lean number of the constituent hypergraphs?
\end{question}

\subsection{A Poset Problem}
One can define a partial order on hypergraphs with the same vertex set, where $H=(V,E)\prec H'=(V,E')$ whenever 
\begin{enumerate}
    \item $E \subseteq E'$, and
    \item for each $e \in E$, there exists an $f \in E'$ such that $e \subseteq f$. 
\end{enumerate}
In such a poset, any hypergraph $H=(V,E)$ 
lies on a chain which also contains its reduction
$H^-$ and its accession 
$H^+$. It would be of interest to study this poset and its properties.

\section{Acknowledgments}
I wish to eagerly thank Dr.\ Peh Ng, who generously granted me a position as a research assistant in order to study this problem, which helped me realize my ambition to become a mathematician. Her encouragement, advising, and direction helped me to develop the pseudocode for the algorithm present in this article, as well as many of our results. I also thank Dr.\ Pamela E. Harris, who spent countless hours refining my writing, encouraging my pursuits, inspiring new results, and generally showing me how to be a model intellectual. Finally, I thank Dr.\ Phil Chang for organizing the Comprehensive, Applied, and Tangible Summer School at the University of Wisconsin Milwaukee in the summer of 2025, allowing Dr.\ Harris and I to develop the code into what it is. The code was also developed with much help from the HypergraphX Python library \cite{hgx} and NetworkX.

\bibliographystyle{unsrt}
\bibliography{Bib.bib}

\section{Appendix}

\subsection{Lean Numbers of Some Knots and Links}\label{sec:LNOSKAL}

Using the tools presented in this paper and some inspection, the lean numbers of several knots and links were deduced. They are presented below with Rolfsen notation. The unknot cannot be lean colored, and so it is omitted from the list. 
The only link we have found so far that cannot be lean colored is $2_1^2$, i.e., the Hopf link, and so that is omitted from \Cref{tab:LinkColors}. \Cref{fig:AppTable} presents pictures of some knots in \Cref{tab:KnotColors}, which gives their lean colors.

\begin{table}[h]
    \begin{tabular}{cc | cc | cc | cc | cc | cc}
        Knot & Lean &  Knot & Lean  &  Knot & Lean  &  Knot & Lean &  Knot & Lean &  Knot & Lean \\
        $3_1$ & 3 & $8_1$ & 4 & $8_{15}$ & 3 & $9_8$ & 4 & $9_{22}$ & 4 & $9_{36}$ & 4 \\
        $4_1$ & 4 & $8_2$ & 4 & $8_{16}$ & 4 & $9_9$ & 4 & $9_{23}$ & 3 & $9_{37}$ & 3\\
        $5_1$ & 5 & $8_3$ & 4 & $8_{17}$ & 4 & $9_{10}$ & 3 & $9_{24}$ & 3 & $9_{38}$ & 3\\
        $5_2$ & 5 & $8_4$ & 4 & $8_{18}$ & 4 & $9_{11}$ & 3 & $9_{25}$ & 4 & $9_{39}$ & 4\\
        $6_1$ & 3 & $8_5$ & 3 & $8_{19}$ & 3 & $9_{12}$ & 4 & $9_{26}$ & 4 & $9_{40}$ & 3\\
        $6_2$ & 5 & $8_6$ & 4 & $8_{20}$ & 3 & $9_{13}$ & 4 & $9_{27}$ & 4 & $9_{41}$ & 4\\
        $6_3$ & 5 & $8_7$ & 4 & $8_{21}$ & 3 & $9_{14}$ & 4 & $9_{28}$ & 3 & $9_{42}$ & 4\\
        $7_1$ & 4 & $8_8$ & 4 & $9_1$ & 3 & $9_{15}$ & 3 & $9_{29}$ & 4 & $9_{43}$ & 4\\
        $7_2$ & 4 & $8_9$ & 4 & $9_2$ & 3 & $9_{16}$ & 3 & $9_{30}$ & 4 & $9_{44}$ & 4\\
        $7_3$ & 4 & $8_{10}$ & 3 & $9_3$ & 4 & $9_{17}$ & 3 & $9_{31}$ & 4 & $9_{45}$ & 4\\
        $7_4$ & 3 & $8_{11}$ & 3 & $9_4$ & 3 & $9_{18}$ & 4 & $9_{32}$ & 4 & $9_{46}$ & 3\\
        $7_5$ & 4 & $8_{12}$ & 4 & $9_5$ & 4 & $9_{19}$ & 4 & $9_{33}$ & 4 & $9_{47}$ & 3\\
        $7_6$ & 4 & $8_{13}$ & 4 & $9_6$ & 3 & $9_{20}$ & 4 & $9_{34}$ & 3 & $9_{48}$ & 3\\
        $7_7$ & 3 & $8_{14}$ & 4 & $9_7$ & 4 & $9_{21}$ & 4 & $9_{35}$ & 3 & $9_{49}$ & 4\\
    \end{tabular}
    \caption{The lean numbers of eighty-four knots.}\label{tab:KnotColors}
\end{table}

\begin{table}[h]
    \begin{tabular}{cc | cc | cc | cc | cc }
        Link & Lean  &  Link & Lean  &  Link & Lean &  Link & Lean &  Link & Lean \\
        $0_1^2$ & 3 & $7_1^2$ & 4 & $8_1^2$ & 5 & $8_4^3$ & 4 & $8_9^2$ & 4\\
        $4_1^2$ & 4 & $7_1^3$ & 4 & $8_1^3$ & 4 & $8_5^2$ & 3 & $8_9^3$ & 5 \\
        $5_1^2$ & 5 & $7_2^2$ & 4 & $8_1^4$ & 5 & $8_5^3$ & 5 & $8_{10}^2$ & 4 \\
        $6_1^2$ & 3 & $7_3^2$ & 4 & $8_2^2$ & 3 & $8_6^2$ & 4 & $8_{10}^3$ & 5 \\
        $6_1^3$ & 3 & $7_4^2$ & 4 & $8_2^3$ & 4 & $8_6^3$ & 4 & $8_{11}^2$ & 4 \\
        $6_2^3$ & 5 & $7_5^2$ & 4 & $8_2^4$ & 4 & $8_7^2$ & 4 & $8_{12}^2$ & 4 \\
        $6_3^3$ & 4 & $7_6^2$ & 4 & $8_3^2$ & 4 & $8_7^3$ & 4 & $8_{13}^2$ & 4 \\
        $6_2^2$ & 5 & $7_7^2$ & 4 & $8_3^3$ & 5 & $8_8^2$ & 4 & $8_{14}^2$ & 4 \\
        $6_3^2$ & 3 & $7_8^2$ & 4 & $8_4^2$ & 4 & $8_8^3$ & 4 & $8_{15}^2$ & 4 \\
        &  &  &  &  &  &  & & $8_{16}^2$ & 4 \\
    \end{tabular}
    \caption{The lean numbers of forty-six links.}\label{tab:LinkColors}
\end{table}

\begin{figure}[h!]
    \centering
    \begin{tabular}{r}
    \begin{subfigure}[b]{0.1\textwidth}
        \includegraphics[width=\textwidth]{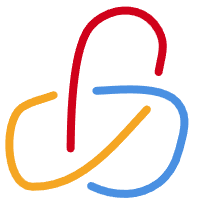}
        \caption*{$3_1$}
    \end{subfigure}
    \begin{subfigure}[b]{0.1\textwidth}
        \includegraphics[width=\textwidth]{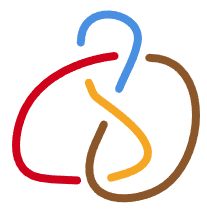}
        \caption*{$4_1$}
    \end{subfigure}
    \begin{subfigure}[b]{0.1\textwidth}
        \includegraphics[width=\textwidth]{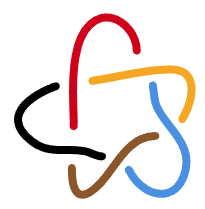}
        \caption*{$5_1$}
    \end{subfigure}
    \begin{subfigure}[b]{0.1\textwidth}
        \includegraphics[width=\textwidth]{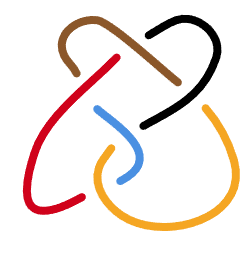}
        \caption*{$5_2$}
    \end{subfigure}
    \begin{subfigure}[b]{0.1\textwidth}
        \includegraphics[width=\textwidth]{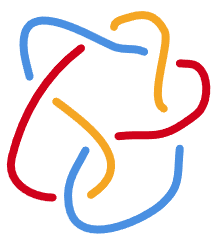}
        \caption*{$6_1$}
    \end{subfigure}
    \begin{subfigure}[b]{0.1\textwidth}
        \includegraphics[width=\textwidth]{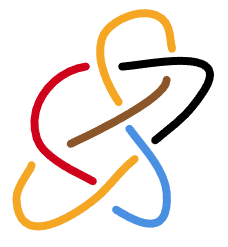}
        \caption*{$6_2$}
    \end{subfigure}
    \begin{subfigure}[b]{0.1\textwidth}
        \includegraphics[width=\textwidth]{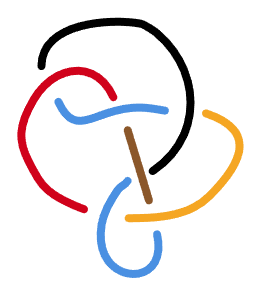}
        \caption*{$6_3$}
    \end{subfigure}
    \begin{subfigure}[b]{0.1\textwidth}
        \includegraphics[width=\textwidth]{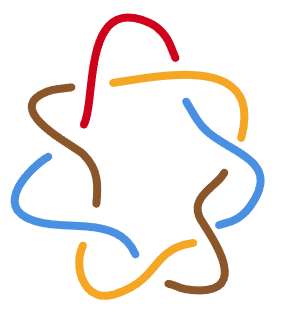}
        \caption*{$7_1$}
    \end{subfigure}
    \end{tabular}
    \begin{tabular}{r}
    \begin{subfigure}[b]{0.1\textwidth}
        \includegraphics[width=\textwidth]{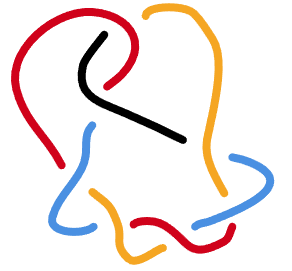}
        \caption*{$7_2$}
    \end{subfigure}
    \begin{subfigure}[b]{0.1\textwidth}
        \includegraphics[width=\textwidth]{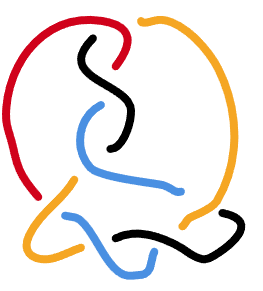}
        \caption*{$7_3$}
    \end{subfigure}
    \begin{subfigure}[b]{0.1\textwidth}
        \includegraphics[width=\textwidth]{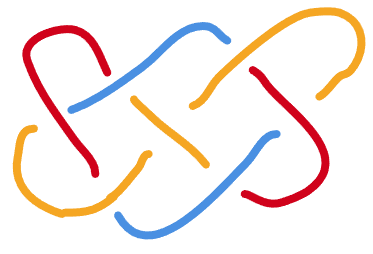}
        \caption*{$7_4$}
    \end{subfigure}
    \begin{subfigure}[b]{0.1\textwidth}
        \includegraphics[width=\textwidth]{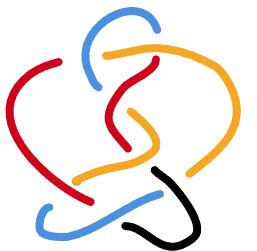}
        \caption*{$7_5$}
    \end{subfigure}
    \begin{subfigure}[b]{0.1\textwidth}
        \includegraphics[width=\textwidth]{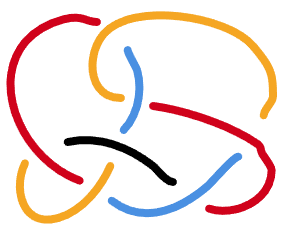}
        \caption*{$7_6$}
    \end{subfigure}
    \begin{subfigure}[b]{0.1\textwidth}
        \includegraphics[width=\textwidth]{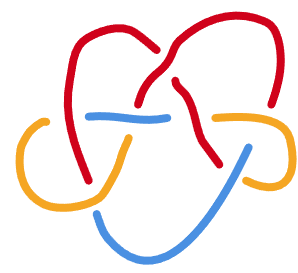}
        \caption*{$7_7$}
    \end{subfigure}
    \begin{subfigure}[b]{0.1\textwidth}
        \includegraphics[width=\textwidth]{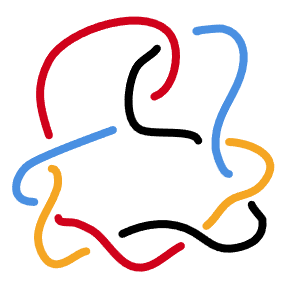}
        \caption*{$8_1$}
    \end{subfigure}
    \begin{subfigure}[b]{0.1\textwidth}
        \includegraphics[width=\textwidth]{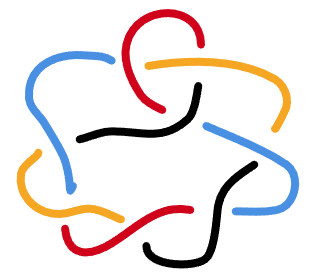}
        \caption*{$8_2$}
    \end{subfigure}
    \end{tabular}
    \begin{tabular}{r}
    \begin{subfigure}[b]{0.1\textwidth}
        \includegraphics[width=\textwidth]{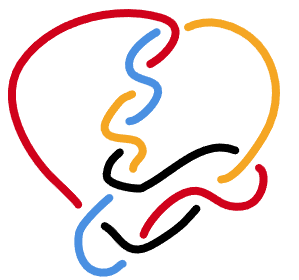}
        \caption*{$8_3$}
    \end{subfigure}
    \begin{subfigure}[b]{0.1\textwidth}
        \includegraphics[width=\textwidth]{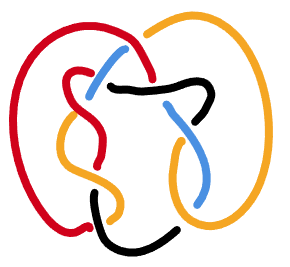}
        \caption*{$8_4$}
    \end{subfigure}
    \begin{subfigure}[b]{0.1\textwidth}
        \includegraphics[width=\textwidth]{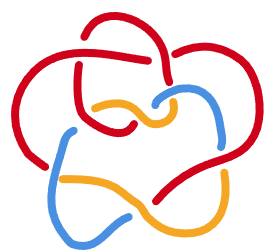}
        \caption*{$8_5$}
    \end{subfigure}
    \begin{subfigure}[b]{0.1\textwidth}
        \includegraphics[width=\textwidth]{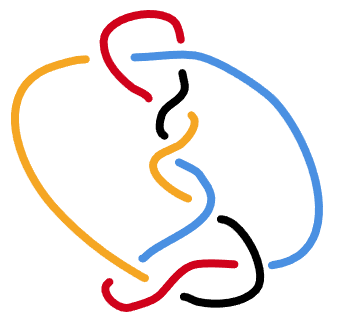}
        \caption*{$8_6$}
    \end{subfigure}
    \begin{subfigure}[b]{0.1\textwidth}
        \includegraphics[width=\textwidth]{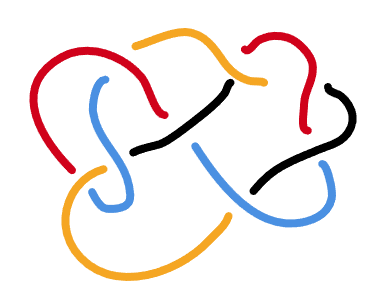}
        \caption*{$8_7$}
    \end{subfigure}
    \begin{subfigure}[b]{0.1\textwidth}
        \includegraphics[width=\textwidth]{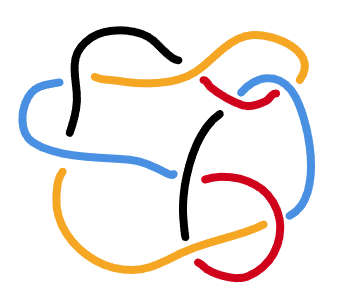}
        \caption*{$8_8$}
    \end{subfigure}
    \begin{subfigure}[b]{0.1\textwidth}
        \includegraphics[width=\textwidth]{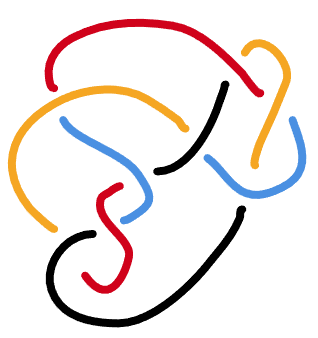}
        \caption*{$8_9$}
    \end{subfigure}
    \begin{subfigure}[b]{0.1\textwidth}
        \includegraphics[width=\textwidth]{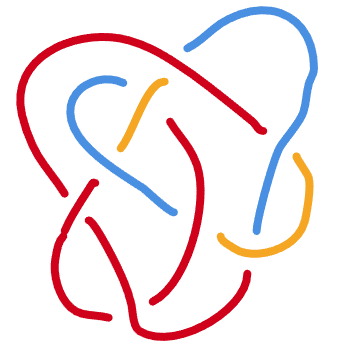}
        \caption*{$8_{10}$}
    \end{subfigure}
    \end{tabular}
    \caption{Twenty-four knots with optimal lean colorings.}\label{fig:AppTable}
\end{figure}

\end{document}